\title{Coxeter groups and quiver representations}
\author{Hugh Thomas}
\address{D\'epartement de math\'ematiques, Universit\'e du Qu\'ebec \`a Montr\'eal, Montréal, Québec, Canada}
\email{hugh.ross.thomas@gmail.com}
\newcommand{\siaj}{(a:i\rightarrow j)}
\newcommand{\add}{\operatorname{add}}
\newcommand{\sjai}{(a:j\rightarrow i)}
\newcommand{\rep}{\operatorname{rep}}
\newcommand{\Ext}{\operatorname{Ext}}
\newcommand{\udim}{\operatorname{\underline {dim}}}
\newcommand{\la}{\langle}
\newcommand{\ra}{\rangle}
\newcommand{\Hom}{\operatorname{Hom}}
\newcommand{\Phir}{\Phi_{\it{re}}}
\newtheorem{theorem}{Theorem}[section]
\newtheorem{proposition}{Proposition}[section]
\newtheorem{lemma}{Lemma}[section]
\theoremstyle{definition}
\newtheorem{example}{Example}[section]
\newcommand{\inv}{\operatorname{inv}}
\begin{document}
\maketitle
In this expository note, I will attempt to showcase the relevance of Coxeter
groups
to quiver representations.  The subjects which I will discuss
are:\begin{enumerate}
\item real and imaginary roots,
\item reflection functors,
\item torsion free classes and $c$-sortable elements.
\end{enumerate}

The first two of these topics are classical, going back respectively to Kac \cite{Ka} and Bernstein-Gelfand-Ponomarev \cite{BGP}.
Let $Q$ be a quiver with $n$ vertices having no oriented cycles.  Associated to $Q$ is a Coxeter group
$W$ which acts on $\mathbb Z^n$.  
Real and imaginary roots in $\mathbb Z_{\geq 0}^n \subseteq \mathbb Z^n$
are defined using $W$, and characterize the dimension vectors of
the indecomposable
representations of $Q$.  This $W$-action on a set containing the dimension
vectors of representations of $Q$ suggests the idea
that $W$ might act directly on representations.  Such an action does not
exist. Reflection functors are, in a sense, the best we can do towards
defining such an action. 
For this classical material I will not give
proofs, but I will provide pointers to the textbooks \cite{DDPW} and \cite{K}.
These are particularly convenient for us since they take quiver representations as
their starting point.

The third topic which I will cover is a more
recent development. Torsion free classes of $\rep Q$ are full subcategories closed under extensions and subrepresentations.  
It is natural to want to classify them.  It turns out that the torsion free
classes that contain only finitely many isomorphism classes of indecomposables
correspond to $c$-sortable elements of $W$, a notion which was introduced by
Reading \cite{Re}.  This correspondence was first established in Dynkin type in
\cite{IT}.  The general result was shown by \cite{AIRT} as a consequence of
their study of preprojective algebras.  In this note, we will give a more
elementary proof, essentially following the argument of \cite{IT}, but without
restricting to the Dynkin case.

This note is based on an expository talk given at the Maurice Auslander international conference in 2013.  I am very grateful to the organizers for having
been invited to speak there, and for their patience in waiting for me to
write up the talk.  I am also happy to acknowledge financial support from
NSERC and the Canada Research Chairs program.  I thank the referees for their 
helpful comments which improved the paper.  

\section{Getting started}

Let $Q$ be a quiver without oriented cycles.  (It would
be possible to work over a general hereditary Artin algebra, but for
concreteness, I prefer not to.  For parts of this note, e.g. in Section 2,
one could also relax the prohibition on oriented cycles in $Q$.)
Suppose that $Q$ has $n$ vertices,
numbered 1 to $n$.  

We write $\rep Q$ for the representations of $Q$ over a fixed ground field
$k$.  Of our main references, 
note that \cite{DDPW} works over an arbitrary ground
field, while \cite{K} works only over an algebraically closed ground field.
Since for the topics we present, the choice of a general ground field does
not introduce any additional complications, we do not assume that $k$ is
algebraically closed.

Write $S_i$ for the
simple representation which has a copy of $k$ at vertex $i$ and the 0
vector space elsewhere.  

For $V\in \rep Q$, write $V_i$ for the vector space over vertex $i$.
Write $\udim V$ for the $n$-tuple $(\dim V_1,\dots,\dim V_n)\in
\mathbb Z^n$.

\subsection{The Euler-Ringel form}
There is an extremely useful bilinear form on $\mathbb Z^n$.

\begin{theorem}[{\cite[Theorem 1.25]{K}, \cite[Proposition 1.9(2)]{DDPW}}] The following equation defines a bilinear form on
  $\mathbb Z^n$, known as the Euler-Ringel form:
$$\la \udim V,\udim W\ra = \dim \Hom(V,W)-\dim \Ext^1(V,W)$$
\end{theorem}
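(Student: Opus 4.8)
The plan is to establish the formula by producing an exact sequence that computes the homology of a two-term complex whose Euler characteristic manifestly equals the right-hand side, and then observing that the same Euler characteristic equals $\langle \udim V, \udim W\rangle$ as defined by the displayed formula. Concretely, for representations $V$ and $W$ of $Q$, I would write down the standard ``canonical resolution'' (the Ringel complex): the map
$$
\bigoplus_{i} \Hom_k(V_i, W_i) \;\longrightarrow\; \bigoplus_{(a\colon i\to j)} \Hom_k(V_i, W_j),
$$
sending a tuple $(f_i)$ to the tuple whose component at an arrow $a\colon i\to j$ is $W_a f_i - f_j V_a$. The first step is to identify the kernel of this map with $\Hom(V,W)$ — this is immediate, since a tuple $(f_i)$ lies in the kernel exactly when it is a morphism of representations. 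The second step, which is the main point, is to identify the cokernel with $\Ext^1(V,W)$; I would do this by interpreting an element of the cokernel as (the isomorphism class of) a short exact sequence $0\to W\to E\to V\to 0$, building $E$ from the data of such a tuple via the obvious extension of the vector spaces and arrow-maps, and checking that two tuples give isomorphic extensions precisely when they differ by an element of the image. (Alternatively one can cite that $\rep Q$ is hereditary, so $\Ext^{\geq 2}$ vanishes and the complex above is genuinely a projective resolution of $W$ applied to $\Hom(V,-)$, or of $V$ applied to $\Hom(-,W)$; either framing works.)

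Once both (co)homology groups are identified, the conclusion is pure linear algebra: for any two-term complex of finite-dimensional vector spaces $A \to B$, one has $\dim(\ker) - \dim(\operatorname{coker}) = \dim A - \dim B$. Here $\dim A = \sum_i (\dim V_i)(\dim W_i)$ and $\dim B = \sum_{(a\colon i\to j)} (\dim V_i)(\dim W_j)$, so
$$
\dim\Hom(V,W) - \dim\Ext^1(V,W) \;=\; \sum_i (\dim V_i)(\dim W_i) \;-\; \sum_{(a\colon i\to j)} (\dim V_i)(\dim W_j).
$$
The right-hand side depends only on $\udim V$ and $\udim W$, and is visibly bilinear in the pair $(\udim V, \udim W)$; taking this expression as the definition of $\langle\,\cdot\,,\,\cdot\,\rangle$ shows both that it is a well-defined bilinear form on $\mathbb{Z}^n$ and that it satisfies the stated identity. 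Note this also shows the left-hand side of the theorem's equation depends only on dimension vectors, which is part of the content.

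I expect the main obstacle to be the second step — the honest identification of the cokernel with $\Ext^1(V,W)$. One must be careful to exhibit a well-defined map in at least one direction (cokernel element $\mapsto$ extension class, or a Yoneda $1$-cocycle $\mapsto$ cokernel element) and verify it is bijective; the bookkeeping of ``which tuples yield isomorphic, or split, extensions'' is where the real work lies. Everything else — the kernel computation and the passage from dimensions of (co)homology to the alternating sum of dimensions of the terms — is routine. Since the paper cites \cite[Theorem 1.25]{K} and \cite[Proposition 1.9(2)]{DDPW} for this statement, I would also simply defer the detailed verification of the $\Ext^1$ identification to those references, presenting the argument above as the conceptual skeleton.
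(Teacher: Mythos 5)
Your argument is correct and is precisely the standard one found in the references the paper cites for this theorem (the paper itself gives no proof, only pointers to \cite{K} and \cite{DDPW}): the two-term complex $\bigoplus_i \Hom_k(V_i,W_i)\to\bigoplus_{\siaj}\Hom_k(V_i,W_j)$ has kernel $\Hom(V,W)$ and cokernel $\Ext^1(V,W)$, and the alternating sum of dimensions depends only on the dimension vectors. You correctly identify the cokernel-equals-$\Ext^1$ step as the only nontrivial point; the paper's subsequent derivation of the explicit formula via semisimple representations is just a different way of reading off the same bilinear expression you obtain directly from the complex.
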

  
It is not obvious that this defines a bilinear form!  
In order for it to do so, the righthand
side must
depend only on $\udim V$ and $\udim W$, and not on the specific
representations $V$ and $W$.  On the face of it, one would not expect this
to be true.
Indeed, the dimensions of
$\Hom(V,W)$ and of $\Ext^1(V,W)$ \emph{do} depend on the specific representations,
rather than on their dimension vectors.  However, taking the difference, we get a quantity which only depends on
$\udim V$ and $\udim W$.  

Once we know that this the righthand side only depends on $\udim V$, $\udim W$,
it is easy to see that it is linear, and to write down an explicit
formula for the pairing, because we can choose the most convenient
representations of the given dimension vectors.  If we want to
evaluate $\la \beta,\gamma\ra$,
the simplest choice is to take semisimple representations, i.e.,
representations which are direct sums of simple representations.  Since
$\Hom(S_i,S_j)$ is one-dimensional if $i=j$ and zero-dimensional otherwise,
and the dimension of $\Ext^1(S_i,S_j)$ is the number of arrows from $i$ to
$j$, we find that
$$\la \beta,\gamma\ra = \sum_{i=1}^n \beta_i\gamma_i -\sum_{\siaj\in Q}
\beta_i\gamma_j.$$

Note that this bilinear form is not symmetric.
We should view the fact that we are naturally given a non-symmetric
bilinear form as a piece of very good luck.  
Sometimes, though, it is useful to
have a symmetric bilinear form.  When we want one, it is easy to get one:
$$( \beta,\gamma)=\la \beta,\gamma\ra + \la \gamma,\beta\ra$$

If, on the contrary, we only had the symmetric form, we would not know how
to unsymmetrize it in a meaningful way.  (This is not a hypothetical situation!
I got into representation theory from Coxeter groups, and, working on Coxeter
groups, I became convinced that I needed to break the symmetry of the
symmetric bilinear form, but, not knowing anything about quiver
representations, I had no idea how to do it.)

\subsection{The Weyl group associated to $Q$}
For $\beta\in \mathbb Z^n$ with $(\beta,\beta)\ne 0$, we can define a reflection
on $\mathbb Q^n$, which we denote $t_\beta$, by:
$$t_\beta(\gamma)=\gamma-2\frac{(\beta,\gamma)}{(\beta,\beta)}\beta$$
This preserves the symmetric form, fixes all vectors orthogonal to $\beta$,
and sends $\beta$ to
$-\beta$.  If $|(\beta,\beta)|$ divides 2, then $t_\beta$ restricts to a map from
$\mathbb Z^n$ to $\mathbb Z^n$.  

It is convenient to let $e_i$ denote the standard basis vector consisting of
all zeros, with a 1 in the $i$-th position.  Equivalently, we can say 
$e_i=\udim S_i$.  

Let us define $s_i=t_{e_i}$.  Note that, by our definition of the symmetric
form, $(e_i,e_i)=2$, so $s_i=t_{e_i}$ is well-defined as a map from $\mathbb Z^n$
to $\mathbb Z^n$.  We refer to the elements $s_i$ as \emph{simple reflections}.

Define $W$ to be the group generated by the elements $s_i$.  Since this is a
group generated by reflections, it is what is called a \emph{reflection
  group}.  The generators satisfy some relations which it is easy to verify:
\begin{itemize}
\item $s_i^2=e$,
  \item if $i$ and $j$ are not adjacent, then $s_is_j=s_js_i$,
  \item if there is exactly one arrow between $i$ and $j$, then
    $s_is_js_i=s_js_is_j$,
  \item for clarity, let me add that if there is more than one arrow between
    $i$ and $j$, then
    there is no corresponding relation.
\end{itemize}
It is natural to wonder if there are any other relations.  It turns out that
there are not.

\begin{theorem}[{\cite[Theorem 4.2(1)]{DDPW}, \cite[Theorem A.11]{K}}] The group $W$ can be defined abstractly as the group generated by
  $n$ elements $s_1,\dots,s_n$ subject to the above relations. \end{theorem}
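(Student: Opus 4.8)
The plan is to exhibit $W$ as the image of an abstractly presented group under a homomorphism and then prove that homomorphism is injective, by recognizing that we already possess a faithful linear action of the abstract group --- essentially its action on $\mathbb Z^n$. Let $\widehat W$ be the group with generators $\sigma_1,\dots,\sigma_n$ and the relations listed above; equivalently, $\widehat W$ is the Coxeter group whose Coxeter matrix $(m_{ij})$ has $m_{ii}=1$, $m_{ij}=2$ when $i,j$ are non-adjacent, $m_{ij}=3$ when there is exactly one arrow between them, and $m_{ij}=\infty$ when there are two or more. Because the simple reflections $s_i$ satisfy all of these relations --- this is the easy verification recorded just before the statement --- the rule $\sigma_i\mapsto s_i$ extends to a group homomorphism $\phi\colon\widehat W\to W$, and $\phi$ is surjective since the $s_i$ generate $W$ by definition. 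Thus the theorem reduces to the injectivity of $\phi$, equivalently to the faithfulness of the action of $\widehat W$ on $\mathbb R^n$ in which $\sigma_i$ acts as $s_i$.

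Next I would observe that this action is, after a harmless renormalization, the classical geometric representation of the Coxeter group $\widehat W$: rescaling the symmetric form by $\tfrac12$ gives a form $B$ with $B(e_i,e_i)=1$, with $B(e_i,e_j)=-\cos(\pi/m_{ij})$ when $m_{ij}\in\{2,3\}$, and with $B(e_i,e_j)\le -1$ when $m_{ij}=\infty$, and $s_i$ is exactly the reflection $\gamma\mapsto\gamma-2B(e_i,\gamma)e_i$. One then runs the standard faithfulness argument. First, the action respects the relations; the only nontrivial point is that $s_is_j$ has order $m_{ij}$, which can be read off from the restriction of the action to the plane $\mathrm{span}_{\mathbb R}(e_i,e_j)$, where $s_is_j$ is a rotation of order $m_{ij}$ if $m_{ij}\in\{2,3\}$ and is of infinite order if $m_{ij}=\infty$ (a transvection when $B(e_i,e_j)=-1$, a hyperbolic element when $B(e_i,e_j)<-1$), so that $\langle\sigma_i,\sigma_j\rangle$ is then infinite dihedral, matching the absence of a relation. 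Second, introduce the roots $\Phi=\widehat W\cdot\{e_1,\dots,e_n\}\subseteq\mathbb R^n$ and the positive roots $\Phi^+=\Phi\cap\sum_{i=1}^n\mathbb R_{\ge0}e_i$; prove by induction on word length that $\Phi=\Phi^+\sqcup(-\Phi^+)$ (sign coherence), and then, again by induction on length, that for $w\in\widehat W$ and a generator $\sigma_i$ one has $\ell(w\sigma_i)>\ell(w)$ if and only if $w(e_i)\in\Phi^+$, whence $\#\{\beta\in\Phi^+: w\beta\in-\Phi^+\}=\ell(w)$.

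Granting these facts the conclusion is immediate: if $w\in\ker\phi$ then $w$ fixes every vector of $\mathbb R^n$, hence sends no positive root to a negative root, hence $\ell(w)=0$ and $w=e$; so $\phi$ is injective and therefore an isomorphism. The main obstacle is the inductive machinery of the second paragraph --- developing the length function on $\widehat W$ and the sign coherence of $\Phi$ together, both of which bottom out in the analysis of rank-two dihedral subgroups; permitting multiple arrows merely forces one to treat a Lorentzian plane alongside the degenerate one, which changes nothing essential. In the present note I would not reproduce this and would instead cite \cite{DDPW} and \cite{K}, where exactly this argument is carried out; a standard alternative to the root-sign argument is the Tits-cone argument showing that $\{x\in\mathbb R^n: B(e_i,x)\ge0 \text{ for all } i\}$ --- or its analogue in the dual space when $B$ is degenerate --- is a strict fundamental domain for the action of $\widehat W$.
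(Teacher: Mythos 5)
The paper gives no proof of this theorem: it is flagged as classical material and delegated to the cited references. Your outline is a correct rendering of exactly the argument carried out there (and in \cite{H}): realize the abstract Coxeter group $\widehat W$ via the (rescaled) geometric representation, check the dihedral relations hold with the exact orders $m_{ij}$, and prove faithfulness through the interplay of the length function with the positive/negative root dichotomy. The one point that genuinely requires care beyond the most standard textbook treatment --- that multiple arrows force $B(e_i,e_j)<-1$ rather than the usual convention $B(e_i,e_j)=-1$ for $m_{ij}=\infty$, so the rank-two analysis must handle a nondegenerate Lorentzian plane --- you identify and correctly dismiss as inessential, since sign-coherence and the hyperbolic (infinite-order) behaviour of $\sigma_i\sigma_j$ persist there. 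So the proposal is sound and consistent with the paper's intent.
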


The group $W$ is therefore a Coxeter group.  For background on Coxeter
groups, see \cite{BB,H}.

Let us remark at this point that the group $W$ does not depend on the
orientation of the arrows of the quiver; that is to say, it depends
only on the underlying graph structure.  This is in marked contrast to
$\rep Q$, which depends highly on the orientation.  Nonetheless, as we shall
see, there are connections between the representations of different
orientations of the same quiver.  

\begin{example}
An example to bear in mind always is the case that the quiver is 
a path with the vertices numbered 1 to $n$ in order along it, and
the arrows oriented arbitrarily.  We refer to such a quiver as being of
type $A_n$.  The corresponding Weyl group is isomorphic to the symmetric
group on $n+1$ letters; we can take the isomorphism to send $s_i$ to
the adjacent transposition $(i\ i+1)$.  In examples, we represent an element
$w$ of the symmetric group in \emph{one-line notation}, by writing down
the sequence $w(1),\dots,w(n+1)$. \end{example}

By definition, each element of $W$ can be expressed as a product of
the simple reflections.  Such an expression for $w$ of minimal length is called
\emph{reduced}.
We write $\ell(w)$ for the length of a reduced expression
for $w$.  
Typically an element $w$ will have many reduced
expressions.  

For $w\in W$ and $s\in S$, if $w$ has a reduced expression beginning
with $s$, then it is easy to see that $\ell(sw)=\ell(w)-1$.  It turns out
that if $w$ does not have a reduced expression beginning with $s$, then
$\ell(sw)=\ell(w)+1$.  (When I write ``beginning with,'' I mean with
respect to the usual left-to-right order in which English is read.)

Define $\Phir = \{we_i \mid w\in W, 1\leq i\leq n\}$, and
$\Phir^+=\Phir \cap \mathbb Z_{\geq 0}^n$. The elements of $\Phir^+$ are called the
real roots for $Q$.  

\begin{example} For type $A_{n}$, we find that $\Phir^+$ consists of the sums $e_i+ e_{i+1}+\dots+e_j$ for $1\leq i\leq j\leq n$.  \end{example}

Let $s_{i_1}\dots s_{i_r}$ be a reduced expression for $w\in W$.  Define the
inversion set corresponding to the reduced expression to be the following set of real roots: $\{e_{i_1},s_{i_1}e_{i_2},s_{i_1}s_{i_2}e_{i_3},\dots,s_{i_1}s_{i_2}\dots s_{i_{r-1}}e_{i_r}\}$.  This can also be viewed as the collection of positive roots which are sent to negative roots by $w^{-1}$.  Note that this latter description
does not depend on the choice of reduced expression for $w$; indeed, we
have the following proposition:

\begin{proposition}[{\cite[Corollary 1.4.4, Corollary 1.4.5, and Corollary 3.1.4]{BB}}]\label{same-diff} For $w\in W$, all the reduced words for $w$ yield the
  same inversion set.  Different elements of $W$ have different inversion sets.
The size of the inversion set of $w$ is $\ell(w)$. \end{proposition}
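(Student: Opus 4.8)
The plan is to replace the reduced-word-dependent definition of the inversion set by an intrinsic description and then read off the three assertions. Specifically, I claim that for a reduced expression $s_{i_1}\cdots s_{i_r}$ of $w$, each root $\beta_k:=s_{i_1}\cdots s_{i_{k-1}}e_{i_k}$ lies in $\Phir^+$, that $w^{-1}\beta_k\in-\Phir^+$, and that conversely every $\beta\in\Phir^+$ with $w^{-1}\beta\in-\Phir^+$ equals some $\beta_k$. Granting this, the inversion set is $\{\beta\in\Phir^+:w^{-1}\beta\in-\Phir^+\}$ --- the ``latter description'' mentioned in the text --- which visibly does not involve a reduced expression, giving the first assertion immediately.

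The technical heart, and the step I expect to be the main obstacle, is the sign lemma: for $w\in W$ and a simple reflection $s_i$, one has $we_i\in\Phir^+$ if $\ell(ws_i)>\ell(w)$ and $we_i\in-\Phir^+$ if $\ell(ws_i)<\ell(w)$, and exactly one of these holds by the length fact recalled in the text (applied to $w^{-1}$). Two companions then follow: every real root lies in $\Phir^+\cup(-\Phir^+)$, and $s_i$ restricts to a permutation of $\Phir^+\setminus\{e_i\}$ while sending $e_i$ to $-e_i$; for the latter I would use that $W$-invariance of $(\,,\,)$ forces $(\gamma,\gamma)=(e_i,e_i)=2$ for every real root $\gamma$, so a real root supported at a single vertex is $\pm e_i$, whence any positive root other than $e_i$ has a strictly positive coordinate at a vertex $\ne i$, a coordinate that $s_i$ leaves unchanged. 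I would prove the sign lemma by induction on $\ell(w)$: writing $w=w's_j$ with $s_j$ the final letter of a reduced word and $\ell(w')=\ell(w)-1$, the argument is routine when $s_j$ can be chosen non-adjacent to $s_i$, and otherwise one must restrict to the rank-two parabolic subgroup $\la s_i,s_j\ra$ and argue with its dihedral combinatorics. This dihedral reduction is the crux; it is classical (compare the treatment of the root system in \cite{BB,H}), and I would either invoke it or reprove it along the standard lines.

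Granting the sign lemma, the remaining points are bookkeeping, using repeatedly that any prefix, contiguous substring, or reverse of a reduced word is again reduced. Then $\ell\big((s_{i_1}\cdots s_{i_{k-1}})s_{i_k}\big)=k>k-1$ gives $\beta_k\in\Phir^+$, and the identity $w^{-1}\beta_k=-(s_{i_r}\cdots s_{i_{k+1}})e_{i_k}$ together with $\ell\big((s_{i_r}\cdots s_{i_{k+1}})s_{i_k}\big)=r-k+1>r-k$ gives $w^{-1}\beta_k\in-\Phir^+$. The $\beta_k$ are pairwise distinct: if $\beta_j=\beta_k$ with $j<k$, cancelling the common prefix and applying $s_{i_j}$ yields $-e_{i_j}=(s_{i_{j+1}}\cdots s_{i_{k-1}})e_{i_k}$, whose right side is in $\Phir^+$ by the same substring argument --- impossible, since no nonzero integer vector is both $\ge 0$ and $\le 0$. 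Hence $|\inv(w)|=r=\ell(w)$, the third assertion; and since the $\beta_k$ already exhaust at least $\ell(w)$ members of $\{\beta\in\Phir^+:w^{-1}\beta\in-\Phir^+\}$, a short induction on $\ell(w)$ using that $s_i$ permutes $\Phir^+\setminus\{e_i\}$ shows there are no others, completing the intrinsic description.

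It remains to prove the second assertion, that $w$ is determined by $\inv(w)$, which I would do by peeling off simple reflections. If $w\ne e$ then some $e_i\in\inv(w)$: otherwise $w^{-1}$ sends every simple root into $\Phir^+$, so $\ell(w^{-1}s_i)>\ell(w^{-1})$ for all $i$ by the sign lemma, forcing $w^{-1}=e$. For such an $i$ we have $\ell(s_iw)<\ell(w)$, so $w=s_iv$ with $\ell(v)=\ell(w)-1$; prepending $s_i$ to a reduced word for $v$ and comparing inversion sets gives $\inv(w)=\{e_i\}\sqcup s_i\inv(v)$ (the union being disjoint because $\inv(v)\subseteq\Phir^+$ cannot contain $-e_i$), so $\inv(v)=s_i\big(\inv(w)\setminus\{e_i\}\big)$ is determined by $\inv(w)$. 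Thus if $\inv(w)=\inv(w')$, choosing $e_i$ in this common set writes $w=s_iv$ and $w'=s_iv'$ with $\inv(v)=\inv(v')$, and induction on length gives $v=v'$, hence $w=w'$.
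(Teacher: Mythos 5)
Your proposal is correct in outline and is essentially the classical proof from the literature; the paper itself offers no proof of this proposition, deferring entirely to \cite{BB}, so there is nothing internal to compare against except the citation. What you do is reconstruct the standard argument (compare \cite[\S 5.4--5.6]{H} or \cite[Ch.~4]{BB}): identify $\inv(w)$ with the intrinsic set $\{\beta\in\Phir^+: w^{-1}\beta\in-\Phir^+\}$, deduce well-definedness for free, get $|\inv(w)|=\ell(w)$ from the distinctness of the $\beta_k$ together with the recursion $\inv(s_iv)=\{e_i\}\sqcup s_i\inv(v)$, and recover $w$ from $\inv(w)$ by peeling off simple reflections. I checked the bookkeeping steps (positivity of the $\beta_k$ via prefixes, negativity of $w^{-1}\beta_k$ via reversed suffixes, distinctness, the disjoint-union recursion, and the ``some $e_i$ lies in $\inv(w)$ when $w\ne e$'' step) and they are all sound; the observation that $(\gamma,\gamma)=2$ for every real root, hence that $s_i$ permutes $\Phir^+\setminus\{e_i\}$, is exactly the right mechanism in this quiver-defined root system, including when multiple arrows join $i$ and $j$. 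The one piece you do not actually prove is the sign lemma ($\ell(ws_i)>\ell(w)$ implies $we_i\in\Phir^+$), whose rank-two/dihedral case you correctly identify as the crux and propose to cite or redo. That is a real omission if the goal were a self-contained proof --- and note that the naive induction ``peel the last letter $s_j$'' genuinely does not close in the adjacent case without the parabolic-coset or dihedral-orbit argument, so it cannot be waved through --- but since the paper cites \cite{BB} for the entire proposition, invoking that sub-lemma from the same sources is at least as rigorous as the paper's own treatment.
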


We write $\inv(w)$ for the inversion set of $w$.  

\begin{example} In type $A_n$, for $j\geq i$, we have $e_i+e_{i+1}+\dots+e_j \in \inv(w)$ if and
  only if $j+1$ precedes $i$ in the one-line notation for $w$.
  
  In type $A_2$, the six elements of $W$, their expressions in one-line notation, and their corresponding
  inversion sets, are as follows:
  $$\begin{array}{ccc}
    e & 123 & \emptyset\\
    s_1 & 213& \{e_1\}\\
    s_2 & 132 & \{e_2\}\\
    s_1s_2& 231 &\{e_1,e_1+e_2\}\\
    s_2s_1& 312 & \{e_2,e_1+e_2\}\\
    s_1s_2s_1& 321 & \{e_1,e_1+e_2,e_2\}\end{array}$$
\end{example}
We have listed the inversion sets in the order corresponding to the reduced
expressions given above.  If we take a different reduced word for the same element, considering $s_2s_1s_2$ instead of $s_1s_2s_1$, we get
the roots $e_2,e_1+e_2,e_1$, in that order.  Consistent with Proposition \ref{same-diff}, the set of roots is the same as appears in the table for $s_1s_2s_1$, though the order in which they appear is different.  

\subsection{Finite versus infinite} There is a basic dichotomy in root
system combinatorics.  In our setting, it manifests in the following
theorem:

\begin{theorem} The following are equivalent:
  \begin{itemize}
  \item $W$ is finite,
  \item $\rep Q$ has only finitely many indecomposables up to isomorphism (in which case we say $Q$ is of \emph{finite representation type}),
  \item $Q$ consists of a finite number of Dynkin quivers, i.e., quivers of type $A_n$ ($n\geq 1$), $D_n$ ($n\geq 4$), $E_6$, $E_7$,
    or $E_8$.
  \end{itemize}
\end{theorem}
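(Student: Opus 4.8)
\medskip
\noindent\textbf{Proof proposal.}
The plan is to prove the cycle of implications: $W$ finite $\Rightarrow$ $Q$ is a disjoint union of Dynkin quivers $\Rightarrow$ $Q$ has finite representation type $\Rightarrow$ $W$ finite, with the last implication proved in contrapositive form. Write $q(\beta)=\la\beta,\beta\ra=\sum_i\beta_i^2-\sum_{\siaj\in Q}\beta_i\beta_j$ for the associated quadratic form, so that $q(\udim V)=\dim\Hom(V,V)-\dim\Ext^1(V,V)$ for every $V\in\rep Q$, and note that $q$ depends only on the underlying graph of $Q$. For the first implication I would use the classical fact that $W$ is finite if and only if $(\cdot,\cdot)$, equivalently $q$, is positive definite (if $W$ is finite it preserves the averaged inner product $\sum_{w\in W}(w\beta,w\gamma)$, whence positive definiteness of $(\cdot,\cdot)$ follows by a standard argument; see \cite{H} or \cite{DDPW}), together with the combinatorial classification of graphs with positive definite Tits form. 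The latter goes as follows: on each extended Dynkin diagram $\widetilde A_n$ (where $\widetilde A_1$ is the two-vertex quiver with two arrows), $\widetilde D_n$, $\widetilde E_6$, $\widetilde E_7$, $\widetilde E_8$ the form $q$ is positive semidefinite with an explicit positive radical vector $\delta$ satisfying $q(\delta)=0$; and every connected graph that is not one of $A_n,D_n,E_6,E_7,E_8$ contains one of these extended diagrams as a full subgraph, so (extending $\delta$ by zero to the remaining vertices and observing that adding edges only decreases $q(\delta)$) its Tits form is not positive definite. The same statement gives the converse, Dynkin $\Rightarrow$ $W$ finite; in particular, when $Q$ is Dynkin the set of real roots $\Phir=\{we_i:w\in W,\ 1\le i\le n\}$ is finite, a fact I use below.

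For the second implication --- this is Gabriel's theorem --- I would invoke the reflection functors of Bernstein--Gelfand--Ponomarev \cite{BGP}. Since $Q$ is acyclic, fix an admissible ordering $i_1,\dots,i_n$ of its vertices, meaning that each $i_k$ is a source of the quiver obtained from $Q$ by reversing all arrows at $i_1,\dots,i_{k-1}$, and form the Coxeter functor $C^{+}=\Phi^{+}_{i_n}\circ\cdots\circ\Phi^{+}_{i_1}\colon\rep Q\to\rep Q$ together with the Coxeter element $c=s_{i_n}\cdots s_{i_1}\in W$. Recall the standard properties: for $V$ indecomposable, either $V$ is isomorphic to an indecomposable projective and $C^{+}V=0$, or $C^{+}V$ is indecomposable, $\udim(C^{+}V)=c(\udim V)$, and $C^{-}C^{+}V\cong V$. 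Given any indecomposable $V$, the vectors $\udim V,\ c(\udim V),\ c^2(\udim V),\dots$ lie in $\mathbb Z_{\ge 0}^n\setminus\{0\}$ as long as the successive applications of $C^{+}$ do not vanish; but $W$ is finite by the first implication, so $c$ has finite order and, being a Coxeter element of a finite Weyl group, fixes no nonzero vector, so these iterates cannot all be nonzero (their sum over a period would be a nonzero $c$-fixed vector in $\mathbb Z_{\ge 0}^n$). Hence $(C^{+})^{k}V=0$ for some $k\ge 1$, which forces $V\cong(C^{-})^{k-1}P_j$ for some indecomposable projective $P_j$. As there are only $n$ indecomposable projectives and, by the same argument applied to $c^{-1}$, the exponent $k-1$ is bounded, $\rep Q$ has only finitely many indecomposables; in fact $\udim$ identifies them with the positive real roots $\Phir\cap\mathbb Z_{\ge 0}^n$.

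For the contrapositive of the third implication, suppose $Q$ is not a disjoint union of Dynkin quivers. By the classification recalled in the first paragraph, the underlying graph of $Q$ contains, as a full subgraph, either the two-vertex $r$-Kronecker quiver with $r\ge 2$ arrows, or a quiver whose underlying graph is an extended Dynkin diagram $\widetilde A_n$, $\widetilde D_n$, $\widetilde E_6$, $\widetilde E_7$, or $\widetilde E_8$. Each such quiver $Q'$ has infinitely many pairwise non-isomorphic indecomposables: for the $r$-Kronecker there is the familiar one-parameter family of indecomposable representations of dimension vector $(1,1)$ (replaced by the families of dimension vector $m\delta$, $m\to\infty$, when the ground field is too small), and each extended Dynkin quiver carries the analogous one-parameter family of indecomposables of dimension vector $\delta$. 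Since the extension-by-zero functor $\rep Q'\hookrightarrow\rep Q$ is exact and fully faithful, it preserves indecomposability and non-isomorphism, so $\rep Q$ also has infinitely many indecomposables, completing the cycle.

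The main obstacle is the second implication: establishing directly that Dynkin quivers are of finite representation type genuinely requires the reflection-functor machinery (topic (2) of this note) together with the finiteness of the root system (topic (1)). By comparison, the first implication is an elementary but somewhat lengthy case analysis in the combinatorics of quadratic forms, and the third relies only on exhibiting the well-known tame families for extended Dynkin quivers. Within the logical organization of this note it would be most natural either to defer the proof until after reflection functors have been introduced, or simply to cite \cite{Ka}, \cite{DDPW}, \cite{K} for this classical content.
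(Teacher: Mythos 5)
The paper itself offers no proof of this theorem: it attributes the equivalence of (ii) and (iii) to Gabriel's theorem and the equivalence of (i) and (iii) to the classification of finite Coxeter groups, citing \cite{DDPW} and \cite{K}. Your proposal reconstructs exactly the standard arguments that those references contain --- positive definiteness of the Tits form and the ADE classification for (i)$\Leftrightarrow$(iii), the Bernstein--Gelfand--Ponomarev Coxeter-functor argument for (iii)$\Rightarrow$(ii), and the tame one-parameter families on extended Dynkin subquivers for the contrapositive of (ii)$\Rightarrow$(i) --- so it is a correct sketch of a genuinely fuller route than the paper takes, at the cost of leaning on several results (properties of $C^{+}$, the classification of positive definite graphs, the families of regular indecomposables) that themselves need proofs of comparable length. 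Two small points of care if you were to write this out in full. First, when there are $r\geq 2$ arrows between $i$ and $j$, the symmetrized Euler form has $(e_i,e_j)=-r$, which is not the canonical Coxeter form $-2\cos(\pi/m_{ij})$; the equivalence ``$W$ finite iff the form is positive definite'' still holds here, but you should note that $r\geq 2$ forces both an infinite dihedral subgroup of $W$ and a rank-two block of determinant $4-r^2\leq 0$, rather than quoting the Coxeter-group statement verbatim. Second, a connected non-Dynkin graph contains an extended Dynkin diagram as a \emph{subquiver} but not always as a \emph{full} subgraph (e.g.\ extra chords may appear); this is harmless because extension by zero on both missing vertices and missing arrows is still exact and fully faithful, but the claim should be phrased that way.
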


  We have already met quivers of type $A_n$.  A quiver of type $D_n$ is a
  quiver with $n$ vertices, obtained by orienting the following graph:

  $$\begin{tikzpicture}
    \draw (0,0) -- (2.5,0);
    \draw[dotted] (2.5,0) -- (3.5,0);
    \draw (3.5,0) -- (4,0);
    \draw (4,0) -- (4.9,.4);
    \draw (4,0) -- (4.9,-.4);
    \filldraw [black] (0,0) circle (2pt) (1,0) circle (2pt) (2,0) circle (2pt)
    (4,0) circle (2pt) (4.9,.4) circle (2pt) (4.9,-.4) circle (2pt);
  \end{tikzpicture}$$

  The quivers of type $E_6$, $E_7$, and $E_8$ are obtained by orienting the
  graphs shown below:

  $$\begin{tikzpicture}
    \draw (0,0) -- (4,0);
    \draw (2,0) -- (2,-1);
    \filldraw[black] (0,0) circle (2pt) (1,0) circle (2pt)
    (2,0) circle (2pt) (3,0) circle (2pt) (4,0) circle (2pt)
    (2,-1) circle (2pt);
    \end{tikzpicture}
\quad\quad\quad\quad
  \begin{tikzpicture}
    \draw (0,0) -- (5,0);
    \draw (2,0) -- (2,-1);
    \filldraw[black] (0,0) circle (2pt) (1,0) circle (2pt)
    (2,0) circle (2pt) (3,0) circle (2pt) (4,0) circle (2pt)
    (2,-1) circle (2pt) (5,0) circle (2pt);
    \end{tikzpicture}$$
$$\begin{tikzpicture}
    \draw (0,0) -- (6,0);
    \draw (2,0) -- (2,-1);
    \filldraw[black] (0,0) circle (2pt) (1,0) circle (2pt)
    (2,0) circle (2pt) (3,0) circle (2pt) (4,0) circle (2pt)
    (2,-1) circle (2pt) (5,0) circle (2pt) (6,0) circle (2pt);
    \useasboundingbox(0,.3);
    \end{tikzpicture}$$
  
The equivalence of (ii) and (iii) is the celebrated theorem of Gabriel \cite[Theorem 1.23]{DDPW}, \cite[Theorem 3.3]{K}.
The equivalence of (i) and (iii) follow from the classification of finite
Coxeter groups, see for example \cite[Theorem A.11]{K}. 

\section{Real and imaginary roots} 

It turns out that $\Phir^+$, as defined above, is intimately connected to
the representation theory of $Q$.  

\begin{theorem}[{\cite[Theorem 7.49]{K}, \cite[Remark 1.26]{DDPW}}] \label{real-roots}
  There is a unique indecomposable representation of $Q$ whose dimension vector
  is $\alpha$ for each $\alpha\in\Phir^+$.
  \end{theorem}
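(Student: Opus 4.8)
The plan is to combine reflection functors with a dimension count on representation varieties, treating existence and uniqueness together. The input from reflection functors --- discussed in the next section --- is the following: if $i$ is a sink of $Q$, then $\Sigma_i^+$ restricts to an equivalence between the representations of $Q$ with no summand $S_i$ and the representations of $\sigma_iQ$, the quiver obtained from $Q$ by reversing the arrows at $i$, with no summand $S_i$; it carries indecomposables to indecomposables and acts on dimension vectors by $s_i$, and dually for a source $i$ and $\Sigma_i^-$. Since an indecomposable of dimension vector $\alpha\neq e_i$ cannot have $S_i$ as a summand, this gives, for each $\alpha\neq e_i$, a bijection between the indecomposables of $\rep Q$ of dimension $\alpha$ and those of $\rep\sigma_iQ$ of dimension $s_i\alpha$; hence the theorem for $(\alpha,Q)$ is equivalent to the theorem for $(s_i\alpha,\sigma_iQ)$.

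First I would exploit this reduction. Since $Q$ is acyclic, there is an ordering $i_1,\dots,i_n$ of the vertices with $i_1$ a sink of $Q$, $i_2$ a sink of $\sigma_{i_1}Q$, and so on; then $\sigma_{i_n}\cdots\sigma_{i_1}Q=Q$, and $c=s_{i_1}\cdots s_{i_n}$ is a Coxeter element. Given a positive real root $\alpha$, apply the reduction repeatedly in the cyclic order $i_1,\dots,i_n,i_1,\dots$: at each stage, if the current root equals $e_i$ for the current sink $i$ we stop; otherwise replace it by its image under $s_i$, which is again a positive real root ($s_i$ permutes the positive roots other than $e_i$, and $\Phir$ does not depend on the orientation). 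Either this process terminates --- in which case, unwinding the reductions, the unique indecomposable of $\rep Q$ of dimension $\alpha$ is the image of a simple representation under a sequence of reflection functors, and both existence and uniqueness follow --- or it does not. It terminates precisely for the \emph{preprojective} roots, and the dual process (at the sources, in the reverse order) handles the \emph{preinjective} roots. In particular, when $Q$ is of finite representation type every positive root is preprojective and the theorem follows at once, recovering Gabriel's theorem.

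What remains is a real root $\alpha$ that is neither preprojective nor preinjective, for which the reflection functors give nothing; for this case, and to obtain uniqueness uniformly, I would pass to geometry, assuming $k$ algebraically closed (the general case reduces to this). Since $W$ preserves the symmetric form and $(e_i,e_i)=2$, every real root satisfies $(\alpha,\alpha)=2$, hence $\la\alpha,\alpha\ra=1$. The variety $\rep(Q,\alpha)=\prod_{\siaj\in Q}\Hom_k(k^{\alpha_i},k^{\alpha_j})$ is an affine space, hence irreducible, and $\operatorname{GL}_\alpha=\prod_i\operatorname{GL}_{\alpha_i}(k)$ acts on it with orbits the isomorphism classes of representations of dimension $\alpha$; a routine count (the stabiliser of $M$ is open in $\operatorname{End}M$, and $\dim\operatorname{GL}_\alpha-\dim\rep(Q,\alpha)=\la\alpha,\alpha\ra$) shows that the orbit of $M$ has codimension $\dim\Ext^1(M,M)$ in $\rep(Q,\alpha)$. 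An irreducible variety has at most one dense orbit, so there is up to isomorphism at most one representation of dimension $\alpha$ with $\Ext^1(M,M)=0$. Uniqueness in the theorem then follows from the fact --- which I would cite, or verify directly in the case at hand --- that an indecomposable over a hereditary algebra whose dimension vector is a real root has no self-extensions, whence $\dim\operatorname{End}M=\la\alpha,\alpha\ra=1$ and its orbit is dense. For existence one shows that the generic representation of dimension $\alpha$ is the desired indecomposable: from its canonical decomposition, the identity $(\alpha,\alpha)=2$ together with the vanishing of $\Ext^1$ between distinct generic summands forbids more than one summand, or a summand with self-extensions.

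The main obstacle is this last point --- producing an indecomposable for a regular real root, equivalently showing $\rep(Q,\alpha)$ has a dense orbit when $\alpha$ is a real root. Reflection functors are powerless here, and one needs either the geometry of representation varieties --- in essence Kac's theorem \cite{Ka} --- or, in tame type, the explicit structure of the regular components. Everything else --- the reflection-functor reduction, the orbit-codimension formula, and the deduction of uniqueness from it --- is routine once those tools are in place; complete treatments are in \cite{K} and \cite{DDPW}.
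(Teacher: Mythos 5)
The paper does not prove this theorem --- it is stated with citations to Kac and Schofield, and the author explicitly declines to prove the classical material --- so your sketch can only be judged on its own terms. The reflection-functor half is fine: it settles the preprojective and preinjective real roots, hence everything in the Dynkin case. The geometric half, however, rests on a false premise. It is \emph{not} true that an indecomposable whose dimension vector is a real root has no self-extensions, nor that the generic representation of a real-root dimension vector is indecomposable. Take the acyclic orientation of the triangle $\tilde A_2$ with arrows $1\to 2$, $2\to 3$, $1\to 3$, and $\alpha=(1,2,1)=\delta+e_2$: this is a positive real root ($(\alpha,\alpha)=2$), but the unique indecomposable of this dimension is the regular module of quasi-length $3$ in the tube of rank $2$; its endomorphism ring is $k[x]/(x^2)$, it has a one-dimensional space of self-extensions, its orbit has codimension $1$, and the generic representation of dimension $(1,2,1)$ is the decomposable module $R_\lambda\oplus S_2$ (a homogeneous regular brick of dimension $\delta$ plus a simple). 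Such non-Schur real roots exist for every non-Dynkin quiver, and they are precisely the roots that the reflection-functor process fails to reach, so the two mechanisms of your proposal break down simultaneously on exactly the set of roots where one of them is needed. Your claim that $\la\alpha,\alpha\ra=1$ forbids more than one summand in the canonical decomposition is refuted by the same example, $(1,2,1)=(1,1,1)+(0,1,0)$, since the isotropic summand contributes $0$ to the form.

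There is a further gap in the uniqueness argument even where rigidity does hold: ``an irreducible variety has at most one dense orbit'' shows there is at most one \emph{rigid} representation of dimension $\alpha$, but by itself does not exclude additional non-rigid indecomposables of the same dimension, which is what the theorem asserts. These difficulties are the reason Kac's proof takes a different route (counting absolutely indecomposable representations over finite fields and showing the count is a reflection-invariant polynomial in $q$, equal to $1$ on real roots). You do concede at the end that the remaining case ``is in essence Kac's theorem,'' which is honest; but the bridge you propose to it --- rigidity of the indecomposable and indecomposability of the generic representation --- is exactly what fails, so as written the sketch does not reduce the statement to anything easier than itself.
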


This theorem was established under the assumption that $k$ is algebraically
closed by Kac \cite{Ka}.  As pointed out by Schofield \cite{Sch}, Kac's proof also applies to any field of characteristic $p$.  The characteristic zero case
is proved in \cite{Sch}.  Note that exceptionally \cite{DDPW} only states the result for
algebraically closed ground fields.

If $Q$ is of finite representation
type, then the representations of Theorem \ref{real-roots}
exhaust the indecomposables.  However, if $Q$ is not of finite
type, there is another class of roots, the \emph{imaginary roots}.

Let $M$ be the cone in $\mathbb Z^n_{\geq 0}$ consisting of vectors with connected
support and such that for $\alpha\in M$, we have $(\alpha,e_i)\leq 0$.
The imaginary roots are all elements of $\mathbb Z^n$ of the form
$w\alpha$ for $w \in W$ and $\alpha \in M$.  The following result is due
to Kac \cite{Ka}:

\begin{theorem}[{\cite[Theorem 7.49 and Theorem A.20]{K}}]
  Let $k$ be algebraically closed.  The dimension vectors of
  indecomposable representations of $Q$ are the positive real roots together
  with the imaginary roots.  The positive real roots are those dimension
  vectors for which there
  exists a unique indecomposable representation, while the 
  imaginary roots are those
  dimension vectors for which there are infinitely many non-isomorphic
  indecomposable representations.  
\end{theorem}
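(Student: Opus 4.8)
We describe the plan for a proof. I would first note that much of the statement comes for free. Theorem~\ref{real-roots} already provides, for each $\alpha\in\Phir^+$, exactly one indecomposable with dimension vector $\alpha$; and the real and imaginary roots are disjoint, since a real root $\alpha=we_i$ satisfies $(\alpha,\alpha)=(e_i,e_i)=2$, hence $\la\alpha,\alpha\ra=1$, whereas an imaginary root $w\beta$ with $\beta\in M$ satisfies $(w\beta,w\beta)=(\beta,\beta)=\sum_i\beta_i(\beta,e_i)\le0$, hence $\la w\beta,w\beta\ra\le0$. So it remains to establish two things: \textbf{(A)} the dimension vector of any indecomposable representation of $Q$ is a positive real root or an imaginary root; and \textbf{(B)} every imaginary root is the dimension vector of infinitely many pairwise non-isomorphic indecomposables. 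Granting these, the theorem follows: a dimension vector of an indecomposable is, by (A), either a positive real root, in which case there is a unique indecomposable by Theorem~\ref{real-roots}, or an imaginary root, in which case there are infinitely many by (B); and there is no third possibility.

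For \textbf{(A)} the plan is to use reflection functors to push the dimension vector into the fundamental region. If $V$ is indecomposable then $\beta=\udim V$ has connected support. Whenever $i$ is a sink or source of the current quiver and $V\not\cong S_i$, the reflection functor at $i$ sends $V$ to an indecomposable of the reoriented quiver with dimension vector $s_i\beta\ge0$, and this strictly decreases the height $\sum_j\beta_j$ precisely when $(\beta,e_i)>0$. I would arrange, by first reflecting at other sinks and sources, that the vertex to be reflected at is a sink, and keep reflecting so as to lower the height; since the height is a non-negative integer this must terminate, and by the combinatorial classification of positive roots (see \cite{K}) it can only terminate at some $e_i$ --- so that $\beta$ is a positive real root --- or at a vector lying in $M$ --- so that $\beta$ is imaginary. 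The one remaining a priori possibility, that the reflections drive the vector out of $\mathbb Z^n_{\ge0}$ before a simple root is reached, is exactly the behaviour that the dimension vector of an indecomposable cannot display. The delicate book-keeping here is that reflection functors change the orientation of $Q$, so one must manage the interplay between the changing orientation and the decreasing height; in particular one wants to know that the set of dimension vectors of indecomposables is independent of the orientation, which is itself a theorem.

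The orientation-independence, together with the existence half of \textbf{(B)}, is the step I expect to be the real obstacle, and where a genuinely new idea (Kac's) is needed. The route I would take is Kac's count over finite fields: the number of isomorphism classes of absolutely indecomposable $\mathbb{F}_q$-representations of $Q$ with dimension vector $\beta$ is a polynomial in $q$ with integer coefficients, independent of the orientation of $Q$, which is nonzero exactly when $\beta$ is a positive root and has degree $1-\la\beta,\beta\ra$; one then transfers to an arbitrary algebraically closed field by a reduction-modulo-$p$ and constructibility argument. This also gives \textbf{(B)}, because for an imaginary root $\beta$ the number of parameters of the indecomposables of dimension $\beta$ is $1-\la\beta,\beta\ra\ge1$. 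A more geometric alternative for \textbf{(B)} is to study the representation variety $\rep(Q,\beta)$, on which $\mathrm{GL}_\beta$ acts with orbits the isomorphism classes and with $\dim\mathrm{GL}_\beta-\dim\rep(Q,\beta)=\la\beta,\beta\ra$, and to show that for an imaginary root the indecomposable locus is non-empty and sweeps out a $(1-\la\beta,\beta\ra)$-parameter family of orbits; and in the affine case one can be fully explicit, realizing the $\mathbb{P}^1$-family of indecomposables of a given imaginary dimension vector via the homogeneous tubes of the subquiver on $\operatorname{supp}(\beta)$. In every version of the argument the reflection-functor reduction of (A) is the easy part, and the hard part is the uniform existence of indecomposables for imaginary roots together with the orientation-independence --- which is precisely what makes either the finite-field count or a genuine study of the representation varieties unavoidable.
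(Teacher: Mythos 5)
The paper does not actually prove this theorem: it is quoted from Kac \cite{Ka} with a pointer to \cite[Theorem 7.49 and Theorem A.20]{K}, consistent with the author's announced policy of not proving the classical material. So there is no internal argument to compare yours against; the comparison has to be with Kac's proof itself. Measured against that, your outline is the standard architecture and the individual assertions you make are correct: the disjointness of real and imaginary roots via $\la\alpha,\alpha\ra=1$ versus $\la\beta,\beta\ra\le 0$; the reflection-functor reduction of the dimension vector of an indecomposable towards either a simple root or the fundamental region $M$, with the observation that an indecomposable's dimension vector cannot be driven out of $\mathbb Z^n_{\ge 0}$; the identity $\dim\mathrm{GL}_\beta-\dim\rep(Q,\beta)=\la\beta,\beta\ra$ and the resulting $(1-\la\beta,\beta\ra)$-parameter family for imaginary roots, which gives infinitude over an algebraically closed (hence infinite) field; and the finite-field count of absolutely indecomposables as the mechanism for both orientation-independence and existence. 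You also correctly locate the genuine obstruction: a vertex at which one wants to reflect need not be a sink or source, so one needs the set of dimension vectors of indecomposables to be orientation-independent, which is itself part of what is being proved and is exactly why the counting (or geometric) argument cannot be avoided.

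That said, what you have written is a plan, not a proof. The claims that carry all the weight --- that the number of absolutely indecomposable $\mathbb F_q$-representations of dimension $\beta$ is a polynomial in $q$, independent of orientation, nonvanishing exactly when $\beta$ is a positive root, and that this transfers to an arbitrary algebraically closed field by reduction modulo $p$ --- are, taken together, essentially a restatement of the theorem, and you cite them rather than establish them. Within the conventions of this expository paper that is defensible (the author does the same), but none of the hard content has actually been supplied.
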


This theorem obviously requires some modification to accommodate finite fields; in 
fact, 
a full analogue of this theorem for general fields is not known.  

\section{Reflection functors}
In this section we define the reflections functors, which play a decisive role
in the representation theory of quivers of finite representation type, but
which, as we will see, can also be important for general quiver representations.
The recognition of their importance goes back to the classic paper of
Bernstein-Gelfand-Ponomarev \cite{BGP}.

We say that a vertex $i$ is a \emph{sink} of $Q$ if all the arrows incident to $i$
point towards $i$.  Dually, we say $i$ is a \emph{source} if all arrows incident
to $i$ point away from it.

If $i$ is a sink or a source, we define $\mu_i(Q)$ to be the quiver
obtained by changing the direction of all the arrows incident to $i$.

Suppose that $i$ is a sink of $Q$.  
Let $V$ be a representation of $Q$.  We want to define a representation
$R_i^+(V)$ of $Q'=\mu_i(Q)$.

First, we define the vector spaces:

\begin{eqnarray*}
  R_i^+(V)_j = \begin{cases}
  V_j &\textrm { for $j\ne i$} \\
   \ker\left( \left.\bigoplus\limits_{\sjai \in Q} \!\!\!V_j\right) \rightarrow V_i\right. &\textrm { for $j=i$}
\end{cases}
  \end{eqnarray*}
Note that
the direct sum runs over all arrows pointing to $i$ in $Q$.  In the case that there
is more than one arrow from $j$ to $i$, the summand $V_j$ will be repeated.

We also have to define the linear maps associated to the arrows of
$\mu_i(Q)$.  For the arrows which are also in $Q$, we notice that neither
the source nor the target vector spaces of the arrow have changed, so we can
and do simply reuse the map from $V$.  It remains to define the maps
corresponding to the arrows in $\mu_i(Q)$ which point away from $i$.  We
use the maps coming from the natural
inclusion of $R_i^+(V)_i$ into the sum over all
the arrows from $j$ to $i$ in $Q$ of $V_j$.  

\begin{example} Let $Q$ be the quiver of type $A_3$ shown below.  Then
  $Q'=\mu_2(Q)$ is also as shown.
  $$\begin{tikzpicture}
    \node (a) at (0,0) [shape=circle,fill=black,inner sep =0pt, minimum size=1.5mm] {};
    \node[below] (e) at (a.south) {$1$} ;
    \node (b) at (1,0) [shape=circle,fill=black,inner sep =0pt, minimum size=1.5mm] {};
    \node[below] (f) at (b.south) {$2$};
\node (c) at (2,0) [shape=circle,fill=black,inner sep =0pt, minimum size=1.5mm] {};

\node (d) at (1,-.8) {$Q$};
\node[below] (g) at (c.south) {$3$};
    \draw[->] (a) -- (b);
    \draw[->] (c) -- (b);
  \end{tikzpicture}
\quad\quad\quad
\begin{tikzpicture}
    \node (a) at (0,0) [shape=circle,fill=black,inner sep =0pt, minimum size=1.5mm] {};
    \node[below] (e) at (a.south) {$1$} ;
    \node (b) at (1,0) [shape=circle,fill=black,inner sep =0pt, minimum size=1.5mm] {};
\node (c) at (2,0) [shape=circle,fill=black,inner sep =0pt, minimum size=1.5mm] {};
    \node[below] (f) at (b.south) {$2$};
    \node[below] (g) at (c.south) {$3$} ;

    \node (d) at (1,-.8) {$Q'$};
    \draw[<-] (a) -- (b);
    \draw[<-] (c) -- (b);
  \end{tikzpicture}
$$

Below, in the lefthand column, we list the six indecomposable
representations of $Q$, and in the righthand column, the 
representation of $Q'$ to which it is sent by $R_2^+$ (which is indecomposable
or, in the first case, the zero representation).  We only give the vector
spaces at each vertex; the maps between consecutive copies of $k$ are
non-zero.
$$
\arraycolsep=3mm
\begin{array}{cc}
  V & R_2^+(V)\\[1mm]
\begin{tikzpicture}
    \node (a) at (0,0) [shape=circle,fill=black,inner sep =0pt, minimum size=1.5mm] {};
    \node[below] (e) at (a.south) {$1$} ;
    \node (b) at (1,0) [shape=circle,fill=black,inner sep =0pt, minimum size=1.5mm] {};
    \node[below] (f) at (b.south) {$2$};
\node (c) at (2,0) [shape=circle,fill=black,inner sep =0pt, minimum size=1.5mm] {};
\node[below] (g) at (c.south) {$3$};
    \draw[->] (a) -- (b);
    \draw[->] (c) -- (b);
    \node[above] at (a.north) {$0$};
    \node[above] at (b.north) {$k$};
    \node[above] at (c.north) {$0$};
  \end{tikzpicture}
  &
  \begin{tikzpicture}
    \node (a) at (0,0) [shape=circle,fill=black,inner sep =0pt, minimum size=1.5mm] {};
    \node[below] (e) at (a.south) {$1$} ;
    \node (b) at (1,0) [shape=circle,fill=black,inner sep =0pt, minimum size=1.5mm] {};
\node (c) at (2,0) [shape=circle,fill=black,inner sep =0pt, minimum size=1.5mm] {};
    \node[below] (f) at (b.south) {$2$};
    \node[below] (g) at (c.south) {$3$} ;
    \draw[<-] (a) -- (b);
    \draw[<-] (c) -- (b);
        \node[above] at (a.north) {$0$};
    \node[above] at (b.north) {$0$};
    \node[above] at (c.north) {$0$};\end{tikzpicture}\\
  \begin{tikzpicture}
    \node (a) at (0,0) [shape=circle,fill=black,inner sep =0pt, minimum size=1.5mm] {};
    \node[below] (e) at (a.south) {$1$} ;
    \node (b) at (1,0) [shape=circle,fill=black,inner sep =0pt, minimum size=1.5mm] {};
    \node[below] (f) at (b.south) {$2$};
\node (c) at (2,0) [shape=circle,fill=black,inner sep =0pt, minimum size=1.5mm] {};
\node[below] (g) at (c.south) {$3$};
    \draw[->] (a) -- (b);
    \draw[->] (c) -- (b);
    \node[above] at (a.north) {$k$};
    \node[above] at (b.north) {$k$};
    \node[above] at (c.north) {$0$};
  \end{tikzpicture}
  &
  \begin{tikzpicture}
    \node (a) at (0,0) [shape=circle,fill=black,inner sep =0pt, minimum size=1.5mm] {};
    \node[below] (e) at (a.south) {$1$} ;
    \node (b) at (1,0) [shape=circle,fill=black,inner sep =0pt, minimum size=1.5mm] {};
\node (c) at (2,0) [shape=circle,fill=black,inner sep =0pt, minimum size=1.5mm] {};
    \node[below] (f) at (b.south) {$2$};
    \node[below] (g) at (c.south) {$3$} ;
    \draw[<-] (a) -- (b);
    \draw[<-] (c) -- (b);
        \node[above] at (a.north) {$k$};
    \node[above] at (b.north) {$0$};
    \node[above] at (c.north) {$0$};\end{tikzpicture}\\
  \begin{tikzpicture}
    \node (a) at (0,0) [shape=circle,fill=black,inner sep =0pt, minimum size=1.5mm] {};
    \node[below] (e) at (a.south) {$1$} ;
    \node (b) at (1,0) [shape=circle,fill=black,inner sep =0pt, minimum size=1.5mm] {};
    \node[below] (f) at (b.south) {$2$};
\node (c) at (2,0) [shape=circle,fill=black,inner sep =0pt, minimum size=1.5mm] {};
\node[below] (g) at (c.south) {$3$};
    \draw[->] (a) -- (b);
    \draw[->] (c) -- (b);
    \node[above] at (a.north) {$0$};
    \node[above] at (b.north) {$k$};
    \node[above] at (c.north) {$k$};
  \end{tikzpicture}
  &
  \begin{tikzpicture}
    \node (a) at (0,0) [shape=circle,fill=black,inner sep =0pt, minimum size=1.5mm] {};
    \node[below] (e) at (a.south) {$1$} ;
    \node (b) at (1,0) [shape=circle,fill=black,inner sep =0pt, minimum size=1.5mm] {};
\node (c) at (2,0) [shape=circle,fill=black,inner sep =0pt, minimum size=1.5mm] {};
    \node[below] (f) at (b.south) {$2$};
    \node[below] (g) at (c.south) {$3$} ;
    \draw[<-] (a) -- (b);
    \draw[<-] (c) -- (b);
        \node[above] at (a.north) {$0$};
    \node[above] at (b.north) {$0$};
    \node[above] at (c.north) {$k$}; \end{tikzpicture}\\
  \begin{tikzpicture}
    \node (a) at (0,0) [shape=circle,fill=black,inner sep =0pt, minimum size=1.5mm] {};
    \node[below] (e) at (a.south) {$1$} ;
    \node (b) at (1,0) [shape=circle,fill=black,inner sep =0pt, minimum size=1.5mm] {};
    \node[below] (f) at (b.south) {$2$};
\node (c) at (2,0) [shape=circle,fill=black,inner sep =0pt, minimum size=1.5mm] {};
\node[below] (g) at (c.south) {$3$};
    \draw[->] (a) -- (b);
    \draw[->] (c) -- (b);
    \node[above] at (a.north) {$k$};
    \node[above] at (b.north) {$k$};
    \node[above] at (c.north) {$k$};
  \end{tikzpicture}
  &
  \begin{tikzpicture}
    \node (a) at (0,0) [shape=circle,fill=black,inner sep =0pt, minimum size=1.5mm] {};
    \node[below] (e) at (a.south) {$1$} ;
    \node (b) at (1,0) [shape=circle,fill=black,inner sep =0pt, minimum size=1.5mm] {};
\node (c) at (2,0) [shape=circle,fill=black,inner sep =0pt, minimum size=1.5mm] {};
    \node[below] (f) at (b.south) {$2$};
    \node[below] (g) at (c.south) {$3$} ;
    \draw[<-] (a) -- (b);
    \draw[<-] (c) -- (b);
        \node[above] at (a.north) {$k$};
    \node[above] at (b.north) {$k$};
    \node[above] at (c.north) {$k$}; \end{tikzpicture}\\
  \begin{tikzpicture}
    \node (a) at (0,0) [shape=circle,fill=black,inner sep =0pt, minimum size=1.5mm] {};
    \node[below] (e) at (a.south) {$1$} ;
    \node (b) at (1,0) [shape=circle,fill=black,inner sep =0pt, minimum size=1.5mm] {};
    \node[below] (f) at (b.south) {$2$};
\node (c) at (2,0) [shape=circle,fill=black,inner sep =0pt, minimum size=1.5mm] {};
\node[below] (g) at (c.south) {$3$};
    \draw[->] (a) -- (b);
    \draw[->] (c) -- (b);
    \node[above] at (a.north) {$k$};
    \node[above] at (b.north) {$0$};
    \node[above] at (c.north) {$0$};
  \end{tikzpicture}
  &
  \begin{tikzpicture}
    \node (a) at (0,0) [shape=circle,fill=black,inner sep =0pt, minimum size=1.5mm] {};
    \node[below] (e) at (a.south) {$1$} ;
    \node (b) at (1,0) [shape=circle,fill=black,inner sep =0pt, minimum size=1.5mm] {};
\node (c) at (2,0) [shape=circle,fill=black,inner sep =0pt, minimum size=1.5mm] {};
    \node[below] (f) at (b.south) {$2$};
    \node[below] (g) at (c.south) {$3$} ;
    \draw[<-] (a) -- (b);
    \draw[<-] (c) -- (b);
        \node[above] at (a.north) {$k$};
    \node[above] at (b.north) {$k$};
    \node[above] at (c.north) {$0$}; \end{tikzpicture}\\

   \begin{tikzpicture}
    \node (a) at (0,0) [shape=circle,fill=black,inner sep =0pt, minimum size=1.5mm] {};
    \node[below] (e) at (a.south) {$1$} ;
    \node (b) at (1,0) [shape=circle,fill=black,inner sep =0pt, minimum size=1.5mm] {};
    \node[below] (f) at (b.south) {$2$};
\node (c) at (2,0) [shape=circle,fill=black,inner sep =0pt, minimum size=1.5mm] {};
\node[below] (g) at (c.south) {$3$};
    \draw[->] (a) -- (b);
    \draw[->] (c) -- (b);
    \node[above] at (a.north) {$0$};
    \node[above] at (b.north) {$0$};
    \node[above] at (c.north) {$k$};
  \end{tikzpicture}
  &
  \begin{tikzpicture}
    \node (a) at (0,0) [shape=circle,fill=black,inner sep =0pt, minimum size=1.5mm] {};
    \node[below] (e) at (a.south) {$1$} ;
    \node (b) at (1,0) [shape=circle,fill=black,inner sep =0pt, minimum size=1.5mm] {};
\node (c) at (2,0) [shape=circle,fill=black,inner sep =0pt, minimum size=1.5mm] {};
    \node[below] (f) at (b.south) {$2$};
    \node[below] (g) at (c.south) {$3$} ;
    \draw[<-] (a) -- (b);
    \draw[<-] (c) -- (b);
        \node[above] at (a.north) {$0$};
    \node[above] at (b.north) {$k$};
    \node[above] at (c.north) {$k$}; \end{tikzpicture}\\
\end{array}   
$$
\end{example}
  
In order for $R_i^+$ to be a functor, for $f\in \Hom(V,W)$, we must define
$R_i^+(f)\in \Hom(R_i^+(V),R_i^+(W))$.  For $j\ne i$, we define $R_i^+(f)_j=f_j$.
To define $R_i^+(f)_i$, we consider the following commutative diagram:

$$\begin{tikzpicture}[xscale=2,yscale=1.5]
  \node (a) at (0.3,0) {$0$};
  \node (b) at (1,0) {$R_i^+(V)_i$};
  \node (c) at (2,0) {\raisebox{-6mm}{$\bigoplus\limits_{\sjai \in Q}V_j$}};
  \node (d) at (3,0) {$V_i$};
  \node (xa) at (0.3,-1) {$0$};
  \node (xb) at (1,-1) {$R_i^+(W)_i$};
  \node (xc) at (2,-1) {\raisebox{-6mm}{$\bigoplus\limits_{\sjai \in Q}W_j$}};
  \node (xd) at (3,-1) {$W_i$};
  \draw[->] (a) -- (b);
  \draw[->] (b) -- (c);
  \draw[->] (c) -- (d);
  \draw[->] (xa) -- (xb);
  \draw[->] (xb) -- (xc);
  \draw[->] (xc) -- (xd);
  \draw[->,dotted] (b) -- (xb);
  \path[->] (c) edge ([yshift=-2mm] xc.north);
  \draw[->] (d) -- (xd);
  \end{tikzpicture}$$

The rows of the diagram are the exact sequences which define
$R_i^+(V)_i$ and $R_i^+(W)_i$.  The vertical undotted arrows are maps defined
by $f$; the
dotted arrow is the arrow we want to define.  We can see in the diagram a map from $R_i^+(V)_i$ to $\bigoplus_{\sjai} W_j$.  We observe that if we
compose it with the map from $\bigoplus_{\sjai} W_j$ to $W_i$, the result will be
zero, which means that in fact the image of $R_i^+(V)_i$ falls in
$R_i^+(W)_i$.  We take this map from $R_i^+(V)_i$ to $R_i^+(W)_i$ to be the definition
of $R_i^+(f)_i$.  

We have gone into considerable detail describing $R_i^+$; the definition of
$R_i^-$ which is defined when $i$ is a source is dual in the natural way.
Let me
mention only the key point that $R_i^-(V)_i$ is defined to be the cokernel
of the natural map from $V_i$ to $\bigoplus_{\siaj} V_j$.  
We then have the following theorem:

\begin{theorem}[{\cite[Theorem 1.18]{DDPW}, \cite[Theorem 3.10]{K}}]
  Let $i$ be a sink in $Q$, and let $Q'=\mu_i(Q)$.  Then
    $R^+_i$ is a functor from representations of $Q$ to representations
    of $Q'$, and $R^-_i$ is a functor from representations of $Q'$ to
    representations of $Q$. \end{theorem}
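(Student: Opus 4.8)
The plan is to verify the three things that make $R_i^+$ a functor: that it carries a representation of $Q$ to a representation of $Q'=\mu_i(Q)$, that it carries a morphism to a morphism, and that it respects identities and composition; the assertion for $R_i^-$ will then follow by a dual argument. The object part is essentially built into the definition. Since $Q'$ is a quiver without relations, a representation of $Q'$ is just a choice of vector space at each vertex together with a linear map for each arrow, and the recipe above supplies exactly this: the spaces $R_i^+(V)_j$, together with, for each arrow of $Q'$, either a structure map reused verbatim from $V$ or the composite $R_i^+(V)_i \hookrightarrow \bigoplus_{\sjai\in Q} V_j \to V_j$ given by the canonical inclusion followed by the projection onto the summand indexed by that arrow. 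So there is nothing to prove for the object part beyond reading off the construction.

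For the morphism part, given $f\in\Hom(V,W)$ I would set $R_i^+(f)_j=f_j$ for $j\ne i$ and define $R_i^+(f)_i$ through the displayed commutative diagram. The point is that the rightmost square commutes precisely because $f$ is a morphism of $Q$-representations (the maps $\bigoplus_{\sjai\in Q} V_j\to V_i$ and $\bigoplus_{\sjai\in Q} W_j\to W_i$ are assembled from the structure maps of $V$ and of $W$), so that the composite $R_i^+(V)_i\hookrightarrow\bigoplus_{\sjai\in Q} V_j\xrightarrow{\bigoplus f_j}\bigoplus_{\sjai\in Q} W_j\to W_i$ equals the composite $R_i^+(V)_i\hookrightarrow\bigoplus_{\sjai\in Q} V_j\to V_i\to W_i$, which vanishes because $R_i^+(V)_i$ is by definition the kernel of $\bigoplus_{\sjai\in Q} V_j\to V_i$. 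Hence the componentwise map $\bigoplus f_j$ carries $R_i^+(V)_i$ into $\ker(\bigoplus_{\sjai\in Q}W_j\to W_i)=R_i^+(W)_i$, and $R_i^+(f)_i$ is the induced map, characterized uniquely by the universal property of the kernel (equivalently, it is $\bigoplus f_j$ corestricted). One then checks that $R_i^+(f)$ commutes with the structure maps of $Q'$: for arrows not incident to $i$ this is just the corresponding commutativity for $f$, and for a new arrow $i\to j$ of $Q'$ it holds because $R_i^+(f)_i$ is the restriction of $\bigoplus f_j$ while the structure map $R_i^+(V)_i\to V_j$ is the restriction of a projection, and projections intertwine $\bigoplus f_j$ with $f_j$ componentwise.

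Functoriality is then immediate: away from $i$ it reduces to the corresponding property of $f$, and at the vertex $i$ it follows from $R_i^+(\mathrm{id}_V)_i$ being the restriction of $\mathrm{id}_{\bigoplus V_j}$ and $R_i^+(g\circ f)_i$ being the restriction of $(\bigoplus g_j)(\bigoplus f_j)$, using the uniqueness in the universal property of the kernel. The statement for $R_i^-$ is proved by the exact dual argument: replace ``sink'' by ``source,'' reverse all arrows, and replace the kernel defining $R_i^+(V)_i$ by the cokernel of the natural map $V_i\to\bigoplus_{\siaj\in Q}V_j$ defining $R_i^-(V)_i$, so that $R_i^-(f)_i$ is the map induced on cokernels by $\bigoplus f_j$, characterized by the universal property of the cokernel. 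The whole argument is routine diagram-chasing and there is no real obstacle; the only place where a little care is needed is to remember that these direct sums are indexed by arrows and not by vertices — a vertex joined to $i$ by several arrows contributes several copies of its vector space, and ``the projection onto the $j$-summand'' is really attached to a specific arrow — but once this bookkeeping is fixed, naturality of kernels and cokernels does all the work.
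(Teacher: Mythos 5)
Your argument is correct and follows the same route the paper takes: the paper itself defers the full verification to the cited references, but the construction it describes beforehand — defining $R_i^+(f)_i$ via the commutative diagram and the observation that the composite into $W_i$ vanishes, so the image lands in the kernel $R_i^+(W)_i$ — is exactly the key step you carry out, and your remaining checks (compatibility with the structure maps of $Q'$, identities, composition, and the dual cokernel argument for $R_i^-$) are the routine details the paper leaves implicit. Your remark that the direct sums are indexed by arrows rather than vertices is the right point of care and matches the paper's own warning.
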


Let us note some important properties of $R_i^+$ and $R_i^-$.
\begin{proposition} Let $i$ be a sink in $Q$, and let $Q'=\mu_i(Q)$.
  We write $S_i$ for the simple representation at $i$ of $Q$, and
  $S_i'$ for the simple representation at $i$ of $Q'$.  
  \begin{enumerate}
\item  $R_i^+(S_i)=0$ and $R_i^-(S_i')=0$.
\item If $V\in \rep Q$ has no direct summand isomorphic to $S_i$, then
  $$\udim R_i^+(V)= s_i(\udim(V)).$$  Similarly if $V'\in \rep Q'$ has no
  direct summand isomorphic to $S_i'$ then $$\udim R_i^-(V')=s_i(\udim(V')).$$  
\item $R_i^+$ is left exact; $R_i^-$ is right exact.  
\item If $V\in \rep Q$, then $R_i^-(R_i^+(V))$ is isomorphic to the direct sum of the indecomposable summands of $V$ which are not isomorphic to $S_i$ (and
  a similar dual statement).
\item The image of $R_i^+(\rep Q)$ consists of all representations of $Q'$
  not having $S_i'$ as an indecomposable summand (and a similar dual statement).  
  \end{enumerate}
\end{proposition}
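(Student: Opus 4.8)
The plan is to reduce every assertion to what happens at the vertex $i$, since away from $i$ both functors leave the vector spaces and the structure maps untouched. At $i$ we have, by definition, $R_i^+(V)_i=\ker m_V$ where $m_V\colon\bigoplus_{\sjai\in Q}V_j\to V_i$ is the combined structure map, and dually $R_i^-(V')_i=\operatorname{coker}$ of the analogous map out of $V'_i$. Moreover $R_i^\pm$ are additive (kernels and cokernels commute with finite direct sums), which we will use freely. Part (1) is then immediate: $Q$ has no loop at $i$, so $(S_i)_j=0$ for every arrow $\sjai$ in $Q$, whence $R_i^+(S_i)_i=\ker(0\to k)=0$ and the remaining spaces of $R_i^+(S_i)$ vanish as well; $R_i^-(S_i')=0$ is dual.

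The engine for (2), and later for (5), is the structural lemma: \emph{if $V$ has no direct summand isomorphic to $S_i$, then $m_V$ is surjective.} Indeed, a complement $C$ of $\operatorname{im} m_V$ in $V_i$, placed at vertex $i$, splits off as a direct summand of $V$: since $i$ is a sink there are no arrows out of $i$, and every structure map of $V$ into $i$ has image inside $\operatorname{im} m_V$, so $V\cong V''\oplus(C\text{ at }i)$ with $V''_i=\operatorname{im} m_V$, and $(C\text{ at }i)$ is a sum of copies of $S_i$; hence $C=0$. Granting this, $\dim R_i^+(V)_i=\sum_{\sjai\in Q}\dim V_j-\dim V_i$, which is precisely the $i$-th coordinate of $s_i(\udim V)$ by the explicit formula for $s_i$ (all arrows at the sink $i$ point towards $i$), while at every other vertex $R_i^+(V)$ and $V$ agree and $s_i$ fixes that coordinate; the claim for $R_i^-$ is dual, and we record the dual lemma for later: a representation $V'$ of $Q'$ has no summand isomorphic to $S_i'$ iff the map $V'_i\to\bigoplus_{\siaj\in Q'}V'_j$ is injective. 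Part (3) is the left-exactness of a kernel functor: given $0\to U\to V\to W\to 0$ in $\rep Q$, the morphism from the (exact) row $0\to\bigoplus U_j\to\bigoplus V_j\to\bigoplus W_j\to 0$ to the (exact) row $0\to U_i\to V_i\to W_i\to 0$ given by $m_U,m_V,m_W$ has kernels $R_i^+(U)_i,R_i^+(V)_i,R_i^+(W)_i$, so the snake lemma yields exactness of $0\to R_i^+(U)_i\to R_i^+(V)_i\to R_i^+(W)_i$; combined with exactness at the other vertices this says $R_i^+$ is left exact, and $R_i^-$ right exact is dual.

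For (4), additivity reduces us to $V$ indecomposable. If $V\cong S_i$ then both sides vanish by (1). If $V$ is indecomposable with $V\not\cong S_i$, the lemma gives the short exact sequence $0\to R_i^+(V)_i\to\bigoplus_{\sjai\in Q}V_j\to V_i\to 0$ with last map $m_V$. In $Q'=\mu_i(Q)$ the arrows out of $i$ are the reverses of the arrows $\sjai$ of $Q$, and the structure map of $R_i^+(V)$ along such an arrow is the corresponding component of the inclusion $R_i^+(V)_i\hookrightarrow\bigoplus_{\sjai\in Q}V_j$; hence $R_i^-(R_i^+(V))_i$ is the cokernel of that inclusion, namely $V_i$ via $m_V$ by the short exact sequence. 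Unwinding the map that $R_i^-$ assigns to an arrow $\sjai$ of $Q=\mu_i(Q')$ shows it is the original structure map $V_a$; and at the remaining vertices nothing has changed. So $R_i^-(R_i^+(V))\cong V$, and summing over the indecomposable summands of a general $V$ gives the statement; the dual is identical.

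Finally, (5). That $R_i^+(V)$ never has $S_i'$ as a summand follows from the dual lemma: for $W'=R_i^+(V)$ the relevant map is the defining inclusion $R_i^+(V)_i\hookrightarrow\bigoplus_{\sjai\in Q}V_j$, which is injective. Conversely, if $W'\in\rep Q'$ has no summand isomorphic to $S_i'$, set $V=R_i^-(W')$; by the dual of (4), $R_i^+(R_i^-(W'))$ is the direct sum of the indecomposable summands of $W'$ not isomorphic to $S_i'$, which is all of $W'$, so $W'$ is in the image of $R_i^+$. I expect the main obstacle to be the bookkeeping in (4): although every step is formal, one must track exactly which arrows $\mu_i$ reverses and verify that the \emph{structure maps}, not merely the dimension vectors, are recovered after applying $R_i^-R_i^+$. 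The only genuinely non-formal input is the structural lemma and its dual --- the observation that a non-surjective $m_V$ at a sink, or a non-injective structure map at a source, forces a copy of the corresponding simple to split off.
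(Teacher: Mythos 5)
Your proof is correct and follows essentially the same route as the paper: surjectivity of the combined map at the sink when there is no $S_i$ summand (yielding (2)), the Snake Lemma for (3), and deducing the image description in (5) from (4). The only difference is one of detail — for (4) the paper simply says it "follows from applying (2) to the decomposition into indecomposables," whereas you supply the actual argument identifying $R_i^-(R_i^+(V))$ with $V$ via the short exact sequence at vertex $i$, which is a welcome elaboration rather than a departure.
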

\begin{proof} 
  (1) is immediate from the definition.

  For (2), we prove on the first statement (about $R_i^+$).
  Suppose that $V$ has no direct summands isomorphic to
    $S_i$.  In this case, the map $\bigoplus_{\sjai} \!\!V_j \rightarrow V_i$
    is surjective.  This means that
    $$\dim R_i^+(V)_i=\left(\sum_{\sjai\in Q} \dim V_j\right) - \dim V_i,$$
    while $\dim R_i^+(V)_j=\dim V_j$ for $j\ne i$.  This claim follows.
    Property (2) explains both the name (``reflection functor'') and one of the
    reasons that this topic is included in this note.

    For (3), the statement is clear except over vertex $i$.  There, it follows
    from an application of the Snake Lemma.  For another proof, see
    \cite[Theorem 3.10]{K} or \cite[Theorem 1.18]{DDPW}.

    (4) follows from applying (2) to the decomposition of $V$ into
    indecomposables.

    For (5), we check directly that for $M\in \rep Q$, we have that
    $R_i^+(M)$ has no $S_i'$ direct summand.  The fact that all representations
    without such a summand are in the image follows from (4).  
  \end{proof}

  \medskip  
At the beginning of this section,
we defined $\mu_i(Q)$ to be the result of reversing the arrows
incident to $i$, provided that $i$ was either a sink or a source.  Clearly,
this definition could still be applied if $i$ is neither a sink nor a source, but
it turns out that, from the point of view of representation theory,
it is not the right thing to do: the right thing is to apply
(Fomin-Zelevinsky) mutation, but the whole story becomes more complicated.
See \cite{DWZ1} for more on this.  

\section{Torsion free classes and $c$-sortable elements}

We are now ready to introduce $c$-sortable elements of $W$ and finite torsion
free classes in $\rep Q$.  We will see that they have very similar
inductive structures, which we will exploit to construct a bijection between
them.

\subsection{The correspondence between Coxeter elements and orientations of a quiver}
Let $|Q|$ be the unoriented graph underlying the quiver $Q$.  As was already mentioned,
the Weyl group associated to $Q$ only depends on $|Q|$.  A Coxeter element for $W$ is,
by definition, a product of each of the Coxeter generators of $W$ in some order.  The 
following fact is easy but important.  

\begin{lemma} There
is a bijection between the acyclic orientations of $|Q|$ and the Coxeter elements of $W$.
\end{lemma}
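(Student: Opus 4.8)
The plan is to build explicit mutually inverse maps between the two sets, exploiting the fact that a Coxeter element has length exactly $n$ and therefore admits only very limited rewriting of its reduced words.

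The first and main step is to show that, for each edge $\{i,j\}$ of $|Q|$ (equivalently, each pair with $s_is_j\ne s_js_i$), whether $s_i$ precedes $s_j$ in a reduced word for a Coxeter element $c$ depends only on $c$. I would begin by checking that for any ordering $v_1,\dots,v_n$ of the vertices the word $s_{v_1}\cdots s_{v_n}$ is reduced, so that every Coxeter element has length $n$. For this, use the criterion that $\ell(us_i)=\ell(u)+1$ iff $u(e_i)\in\Phir^+$, together with the following observation, proved by induction on $m$: for distinct indices $j,w_1,\dots,w_m$, the root $s_{w_1}\cdots s_{w_m}(e_j)$ has all coordinates $\ge 0$ and coordinate $1$ in position $j$. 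The point is that the $w_1$-coordinate of $u:=s_{w_2}\cdots s_{w_m}(e_j)$ is $0$, since $w_1$ differs from $j,w_2,\dots,w_m$, so when $s_{w_1}$ acts it changes only that coordinate, from $0$ to $-(u,e_{w_1})\ge 0$, by the explicit formula for $(\,\cdot\,,\,\cdot\,)$ and nonnegativity of $u$. Granting that reduced words for Coxeter elements have length $n$, I would invoke two standard facts about Coxeter groups (see \cite{BB}): the set of simple reflections occurring in a reduced word depends only on the element, and any two reduced words for an element are related by commutation and braid moves. Since a reduced word for $c$ has length $n$ and uses all of $s_1,\dots,s_n$, it uses each exactly once, so it contains no subword $s_is_js_i$; hence no braid move applies, and any two reduced words for $c$ differ by commutations $s_is_j\mapsto s_js_i$ with $i,j$ non-adjacent in $|Q|$. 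Such commutations never swap the order of two generators $s_i,s_j$ with $\{i,j\}$ an edge, which gives the claimed invariance. Write $i\prec_c j$ when $s_i$ precedes $s_j$.

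Next, I would define $\Phi$ from Coxeter elements to orientations of $|Q|$ by orienting $\{i,j\}$ from $i$ to $j$ exactly when $i\prec_c j$; this orientation is acyclic, since a directed cycle through distinct vertices $i_1\to\cdots\to i_r\to i_1$ would force $i_1\prec_c i_2\prec_c\cdots\prec_c i_r\prec_c i_1$. In the reverse direction I would define $\Psi$ from acyclic orientations to Coxeter elements by $\Psi(\omega)=s_{v_1}\cdots s_{v_n}$, where $(v_1,\dots,v_n)$ is any linear extension of the partial order in which $x\le y$ iff $\omega$ has a directed path from $x$ to $y$. This is well defined because any two linear extensions of a finite poset are connected by transpositions of consecutive incomparable elements (a standard fact), and incomparable vertices are non-adjacent in $|Q|$, so such a transposition only interchanges two commuting generators.

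It then remains to check $\Phi\circ\Psi=\mathrm{id}$ and $\Psi\circ\Phi=\mathrm{id}$. A linear extension of $\omega$ sends every arrow of $\omega$ to an earlier-to-later pair, so $\Phi(\Psi(\omega))=\omega$; and for a Coxeter element $c$ with reduced word $s_{v_1}\cdots s_{v_n}$, every directed path in $\Phi(c)$ is increasing in the positions along $v_1,\dots,v_n$, so $(v_1,\dots,v_n)$ is a linear extension of $\Phi(c)$ and $\Psi(\Phi(c))=c$. I expect the main obstacle to be the first step, and concretely its two ingredients: that Coxeter elements have length $n$ (the coordinate-positivity induction, the only real computation), and that braid moves cannot occur among their reduced words (where the word property is used); the remaining verifications are routine bookkeeping about posets and their linear extensions.
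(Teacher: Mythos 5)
Your proposal is correct and follows essentially the same route as the paper: both directions of the bijection are given by exactly the same rule (orient $\{i,j\}$ according to which of $s_i$, $s_j$ comes first, and conversely read off a Coxeter word from a linear extension of the acyclic orientation). The difference is only one of rigor: you carefully justify the point the paper leaves implicit, namely that the relative order of two adjacent generators in a reduced word for a Coxeter element is an invariant of the group element (via the length-$n$ computation and the word property), which is a worthwhile addition but not a different argument.
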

\begin{proof} 
Associate to $Q$ the product of the Coxeter generators in which $s_i$ precedes $s_j$ if 
there are one or more arrows from $j$ to $i$ in the quiver.  This does not prescribe the
order of all pairs of Coxeter generators, but any pair whose order is not prescribed is
a pair whose corresponding vertices are not connected by an arrow, and which therefore commute.  The fact that it is possible to find a linear order on the generators satisfying
this prescription follows from the fact that the orientation of $Q$ has no cycles.  

Conversely, any Coxeter element determines an acyclic orientation of $|Q|$ by orienting
any edges between $i$ and $j$ towards $i$ if $s_i$ precedes $s_j$ in the Coxeter element.
\end{proof}

We write $c_Q$ for the Coxeter element associated to $Q$.  

\begin{example}
  Let $Q_1$ and $Q_2$ be as shown below:
  $$\begin{tikzpicture}
    \node (a) at (0,0) [shape=circle,fill=black,inner sep =0pt, minimum size=1.5mm] {};
    \node[below] (e) at (a.south) {$1$} ;
    \node (b) at (1,0) [shape=circle,fill=black,inner sep =0pt, minimum size=1.5mm] {};
    \node[below] (f) at (b.south) {$2$};
\node (c) at (2,0) [shape=circle,fill=black,inner sep =0pt, minimum size=1.5mm] {};

\node (d) at (1,-.8) {$Q_1$};
\node[below] (g) at (c.south) {$3$};
    \draw[->] (a) -- (b);
    \draw[->] (b) -- (c);
  \end{tikzpicture}
\quad\quad\quad
\begin{tikzpicture}
    \node (a) at (0,0) [shape=circle,fill=black,inner sep =0pt, minimum size=1.5mm] {};
    \node[below] (e) at (a.south) {$1$} ;
    \node (b) at (1,0) [shape=circle,fill=black,inner sep =0pt, minimum size=1.5mm] {};
\node (c) at (2,0) [shape=circle,fill=black,inner sep =0pt, minimum size=1.5mm] {};
    \node[below] (f) at (b.south) {$2$};
    \node[below] (g) at (c.south) {$3$} ;

    \node (d) at (1,-.8) {$Q_2$};
    \draw[->] (a) -- (b);
    \draw[->] (c) -- (b);
  \end{tikzpicture}
$$
In the Coxeter element associated to $Q_1$, we have $s_3$ before
$s_2$ and $s_2$ before $s_1$, so $c_{Q_1}=s_3s_2s_1$.  In the Coxeter element
associated to $Q_2$, we have $s_2$ before $s_1$ and $s_2$ before $s_3$.
We therefore could write $c_{Q_2}=s_2s_1s_3$ or $c_{Q_2}=s_2s_3s_1$;
these
are equal because $s_1s_3=s_3s_1$.  \end{example}

\subsection{Definition of $c$-sortable elements}

Let $c=s_{i_1}\dots s_{i_n}$ be a Coxeter element for $W$.  For $J\subseteq\{1,\dots,n\}$, we write $c_J$ for the subword
of $c$ whose reflections are those $s_j$ with $j\in J$.

An element $w\in W$ is called $c$-sortable if it has a reduced expression
$w=c_{J_1}c_{J_2}\dots c_{J_k}$ with $J_1\supseteq J_2 \supseteq \dots \supseteq J_k$.

\begin{example} In the Weyl group of type $A_2$, isomorphic to $S_3$,
  setting $c=s_1s_2$, five of the six group elements are $c$-sortable.
  The one which is not is $s_2s_1$.  In the Weyl group of type $A_3$,
  for each choice of $c$ there are 14 $c$-sortable elements.  In general,
  the $c$-sortable elements in type $A_n$ are counted by the $n$-th
  Catalan number, $\frac1{n+2}{ 2n+2\choose n+1}$.  See \cite[Theorem 9.1]{Re}, which gives complete information about enumeration in all finite types.\end{example}

The $c$-sortable elements of a Coxeter group $W$ were defined by Reading \cite{Re} as a bridge between the combinatorics of finite-type cluster algebras and the
noncrossing partitions of $W$.  They play a central role in the Coxeter-theoretic approach to the combinatorics of cluster algebras, as developed by Reading
and Speyer in a sequence of articles \cite{RS1,RS2,RS3}; they have also
turned out to have interesting applications in the representation theory
of preprojective algebras, see \cite{AIRT}.

There is an extremely useful inductive characterization of
$c$-sortable elements.
In the theorem below, we use the notation
$W_{\la s_{j}\ra}$ for the Coxeter group generated by the reflections $s_i$ for
$i\ne j$.  If $W$ is the Coxeter group corresponding to the quiver
$Q$, then $W_{\la s_{j}\ra}$ is the Coxeter group for the quiver with vertex
$j$ \emph{removed}.  (While initially counter-intuitive, this notation is
standard and very useful.)

\begin{theorem}[{\cite[Lemma 2.4, Lemma 2.5, and Remark 2.7]{Re}}]
  \label{reading}
Let $w\in W$, and let $c=s_{i_1}\dots s_{i_n}$.  
\begin{enumerate}\item
If $\ell(s_{i_1}w)>\ell(w)$, then
$w$ is $c$-sortable iff $w$ is a $s_{i_2}\dots s_{i_n}$-sortable element in $W_{\la s_{i_1}\ra}$.
\item
If $\ell(s_{i_1}w)<\ell(w)$,  then $w$ is $c$-sortable iff $s_{i_1}w$ is $s_{i_2}s_{i_3}\dots s_{i_n}s_{i_1}$-sortable.
\end{enumerate}
\end{theorem}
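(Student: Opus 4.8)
The plan is to argue throughout with reduced expressions of the form $c_{J_1}\cdots c_{J_k}$, leaning on three standard facts. (i) Every subword $c_J$ of a Coxeter element is reduced, being a Coxeter element of the parabolic subgroup on $\{s_j\mid j\in J\}$. (ii) An element lies in a standard parabolic $W_{\la s_j\ra}$ exactly when none of its reduced words uses the letter $s_j$, and then its length computed inside $W_{\la s_j\ra}$ and inside $W$ agree. (iii) As recalled above, $\ell(sw)<\ell(w)$ precisely when $w$ has a reduced word beginning with $s$. Put $s=s_{i_1}$, so $c=s\,s_{i_2}\cdots s_{i_n}$, and set $c'=s_{i_2}\cdots s_{i_n}s$ (equal to $scs$), which is again a Coxeter element; the point to keep in mind is that $c$ and $c'$ induce the same linear order on $\{i_2,\dots,i_n\}$ and differ only by moving $s$ from the front to the back of the cyclic word.

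For (1), suppose $\ell(sw)>\ell(w)$. If $w=c_{J_1}\cdots c_{J_k}$ with $J_1\supseteq\cdots\supseteq J_k$, then $i_1\notin J_1$: otherwise $c_{J_1}$, hence the whole reduced word $c_{J_1}\cdots c_{J_k}$, begins with $s$, contradicting (iii). Since the $J_\ell$ are nested, no block involves $s$, so $w\in W_{\la s_{i_1}\ra}$, and the same expression, read against the Coxeter element $s_{i_2}\cdots s_{i_n}$ of $W_{\la s_{i_1}\ra}$, witnesses (using (ii)) that $w$ is $s_{i_2}\cdots s_{i_n}$-sortable there. Conversely, an $s_{i_2}\cdots s_{i_n}$-sorting expression for $w\in W_{\la s_{i_1}\ra}$ uses only letters $\neq s$, so each of its blocks coincides with the corresponding block of $c$, and by (i)--(ii) it stays reduced in $W$; hence it exhibits $w$ as $c$-sortable.

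For (2), suppose $\ell(sw)<\ell(w)$, so $\ell(w)=\ell(sw)+1$. If $w=c_{J_1}\cdots c_{J_k}$ is a $c$-sorting expression, the argument of part (1) shows that $\ell(sw)<\ell(w)$ forces $i_1\in J_1$; let $m$ be the largest index with $i_1\in J_m$, and let $A_\ell$ denote the subword of $c_{J_\ell}$ on the letters indexed by $J_\ell\setminus\{i_1\}$, so that $c_{J_\ell}=s\,A_\ell$ for $\ell\le m$ and $c_{J_\ell}=A_\ell$ for $\ell>m$. Deleting the initial $s$ yields a reduced word $sw=A_1\,s\,A_2\,s\cdots A_{m-1}\,s\,A_m\,A_{m+1}\cdots A_k$, and the crux is that regrouping this against $c'=s_{i_2}\cdots s_{i_n}s$ moves the $s$ that headed block $\ell$ to the tail of block $\ell-1$: one checks the result is $sw=c'_{J_1}\cdots c'_{J_{m-1}}\,c'_{J_m\setminus\{i_1\}}\,c'_{J_{m+1}}\cdots c'_{J_k}$ (discarding an empty final block if $J_m=\{i_1\}$), whose index sets stay nested because $J_{m+1}\subseteq J_m$ with $i_1\notin J_{m+1}$ gives $J_m\setminus\{i_1\}\supseteq J_{m+1}$. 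Thus $sw$ is $c'$-sortable. The converse is the mirror image: from a $c'$-sorting expression for $sw$, prepend $s$ (legitimate since $\ell(w)=\ell(sw)+1$) and regroup against $c$, so the trailing $s$'s migrate to the heads of the following blocks, producing a $c$-sorting expression for $w$.

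I expect the genuine work to be the regrouping in part (2): verifying that after deleting (or inserting) the single letter $s=s_{i_1}$ and passing to the conjugate $c'=scs$, the blocks parsed against $c'$ are still nested. This is finite but slightly delicate bookkeeping — one tracks block by block whether $i_1\in J_\ell$, and the boundary block $J_m$ (together with the degenerate case $J_m=\{i_1\}$) must be treated on its own — but no idea beyond the front-to-back cycling of $s$ is needed. This is exactly the content of Lemmas 2.4 and 2.5 of \cite{Re}.
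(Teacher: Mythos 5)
Your proof is correct and follows the same route as the paper's: part (1) uses the nestedness of the $J_\ell$ to force $w$ into the parabolic subgroup, and part (2) strips the leading $s_{i_1}$ from the sorting word and reparses the remainder against the cycled Coxeter element $s_{i_1}cs_{i_1}$. You simply make explicit the block-by-block regrouping that the paper compresses into ``when we unwind the definition,'' and your bookkeeping (the boundary block $J_m$ and the degenerate case $J_m=\{i_1\}$) checks out.
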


\begin{proof} Suppose first that $\ell(s_{i_1}w)>\ell(w)$.  This means that
  no reduced expression for $w$ can begin with $s_{i_1}$.  If $w$ is
  $c$-sortable, then, since it does not use the $s_{i_1}$ in the first copy
  of $c$, it can never use any $s_{i_1}$.  Therefore it is in $W_{\la s_{i_1}\ra}$;
  the $s_{i_2}\dots s_{i_n}$-sortability there is immediate; the converse is
  also clear.

  Now suppose that $\ell(s_{i_1}w)<\ell(w)$.  This means that there is a
  reduced expression for $w$ beginning with $s_{i_1}$.  Suppose that $w$ is
  $c$-sortable.  Since $w \not \in W_{\la s_{i_1}\ra}$, any expression for
  $w$ must use $s_{i_1}$ at least once, so the expression which manifests that
  $w$ is $c$-sortable must begin with $s_{i_1}$.  Therefore, it expresses
  $w$ as $s_{i_1}$ times a reduced word for $s_{i_1}w$, which, when we unwind
  the definition, must be $s_{i_2}\dots s_{i_n}s_{i_1}$-sortable.  This is
  exactly what we wanted.   The converse argument works the same way.
  \end{proof}

\begin{example} In type $A_2$, with $c=s_1s_2$,
  the first case of Theorem \ref{reading} applies
  to the $c$-sortable elements $e$ and $s_2$, while the second case 
  applies to $s_1$, $s_1s_2$, and $s_1s_2s_1$,  The non-$c$-sortable element
  $s_2s_1$ falls into the first case of the theorem, which says that it is not
  $c$-sortable because it is not an $s_2$-sortable element of $W_{\la s_1\ra}$
  (obviously true, since it is not an element of $W_{\la s_1\ra}$ at all).  
  \end{example}
  
\subsection{The correspondence between $c$-sortable elements and finite torsion free classes}

A torsion free class in $\rep Q$ is a full additive subcategory
closed under extensions and subrepresentations.
That is to say, if $\mathcal F$ is a torsion free class and $Y$ is in $\mathcal F$, so are all the subrepresentations of $Y$, and if $X,Z\in \mathcal F$,
such that there is an extension
$$0 \rightarrow X \rightarrow Y \rightarrow Z \rightarrow 0,$$
then $Y \in \mathcal F$.

We say that a torsion free class is finite if it contains finitely many
indecomposable representations up to isomorphism.  If $Q$ is a Dynkin quiver,
then all torsion free classes will be finite.  Conversely, if all torsion free classes in $\rep Q$ are finite, then $Q$ is a Dynkin quiver, since the category 
of all representations is itself a torsion free class.


For $w$ a $c_Q$-sortable element of $W$, define $\mathcal F(w)$ to be the
full subcategory of $\rep Q$ consisting of direct sums of copies of the
indecomposable representations whose dimension vectors are roots in $\inv(w)$.  

We can now state the theorem whose proof occupies the remainder of the paper.

\begin{theorem}\label{main} The map $\mathcal F$ is a bijection from $c_Q$-sortable
  elements of $W$ to finite torsion free classes in $\rep Q$.  \end{theorem}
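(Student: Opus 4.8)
The plan is to prove the theorem by an induction on $\rep Q$ that runs parallel to Reading's recursive description of $c$-sortable elements in Theorem \ref{reading}. Let $i=i_1$ be the vertex whose simple reflection is the first letter of $c_Q$; then $i$ is a sink of $Q$, so $S_i$ is a projective representation and $\Ext^1(S_i,-)=0$. Let $Q\setminus i$ be $Q$ with vertex $i$ (and its arrows) deleted, and let $Q'=\mu_i(Q)$, so that $i$ is a source of $Q'$ and $c_{Q'}=s_{i_2}\dots s_{i_n}s_{i}$. By definition $e_i\in\inv(w)$ if and only if $S_i\in\mathcal F(w)$, and the two cases of Theorem \ref{reading} are distinguished by exactly this condition; so I want two bijections intrinsic to the categories involved: first, finite torsion free classes of $\rep Q$ \emph{not} containing $S_i$ should correspond to finite torsion free classes of $\rep(Q\setminus i)$, by regarding a representation supported away from $i$ as a representation of $Q\setminus i$; second, finite torsion free classes of $\rep Q$ \emph{containing} $S_i$ should correspond to finite torsion free classes of $\rep Q'$ not containing $S_i'$, via $\mathcal F\mapsto\add R_i^+(\mathcal F)$, with inverse $\mathcal G\mapsto\add(\{S_i\}\cup R_i^-(\mathcal G))$.

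Granting these two bijections, the theorem follows by induction on $n+m$, where $n$ is the number of vertices of $Q$ and $m$ is the number of indecomposables in the torsion free class (equivalently $\ell(w)$, once the bijection is in place): the first bijection lowers $n$, and the second lowers $m$ because $R_i^+(S_i)=0$ and $R_i^+$ takes the remaining indecomposables of $\mathcal F$ to distinct indecomposables, so the recursion bottoms out at $\mathcal F(e)=0$. To connect this to the combinatorics I would verify two compatibilities, both routine given Theorem \ref{real-roots}: that $\mathcal F(w)$ computed in $\rep(Q\setminus i)$ agrees with $\mathcal F(w)$ computed in $\rep Q$ when $w\in W_{\la s_i\ra}$ (the indecomposables of $Q$ with support off $i$ are precisely those of $Q\setminus i$); and that, when $e_i\in\inv(w)$ and $u=s_iw$, one has $\inv(w)=\{e_i\}\sqcup s_i(\inv(u))$, whence $\mathcal F(w)=\add(\{S_i\}\cup R_i^-(\mathcal F(u)))$, using the identity $\udim R_i^-(Y)=s_i\udim Y$ for indecomposable $Y\not\cong S_i'$. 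Together with Theorem \ref{reading} these say that $\mathcal F$ intertwines the recursion for sortable elements with the two bijections above. Injectivity of $\mathcal F$ needs no induction: $\mathcal F(w)$ determines the set of dimension vectors of its indecomposables, which is $\inv(w)$, and distinct elements of $W$ have distinct inversion sets by Proposition \ref{same-diff}.

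The first bijection is easy. If $i$ is a sink and $V\in\mathcal F$ has $V_i\ne 0$, then the subspaces $V_i$ at vertex $i$ and $0$ elsewhere form a subrepresentation isomorphic to $S_i^{\dim V_i}$, so $S_i\in\mathcal F$; contrapositively, if $S_i\notin\mathcal F$ then every object of $\mathcal F$ is supported off $i$, and since subrepresentations and extensions of such representations are again supported off $i$, being a torsion free class in $\rep Q$ and in $\rep(Q\setminus i)$ are the same condition. The real content is the second bijection, and within it the verification that $\add R_i^+(\mathcal F)$ and $\add(\{S_i\}\cup R_i^-(\mathcal G))$ really are closed under subrepresentations and extensions. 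The tools are the explicit formulas defining $R_i^\pm$ together with the listed properties of reflection functors: $R_i^+$ is left exact and $R_i^-$ is right exact; $R_i^+R_i^-$ and $R_i^-R_i^+$ are the identity modulo $S_i'$- and $S_i$-summands; and a representation of $Q$ lies in the image of $R_i^-$ exactly when its structure map $\bigoplus_{\sjai}V_j\to V_i$ at the sink $i$ is surjective, i.e. when it has no $S_i$-summand. I would use these to show that $R_i^+$ carries short exact sequences of $S_i$-summand-free representations of $Q$, and inclusions of such, to short exact sequences and inclusions in $\rep Q'$ (the point being that $R_i^+(V)_i$ is a \emph{kernel}, which is exactly what is needed for subrepresentations to behave); that adjoining $S_i$ to the image of $R_i^-$ repairs precisely the failure of that image to be closed under subrepresentations at vertex $i$, with the projectivity of $S_i$ making the relevant extensions split; and that $R_i^+$ and $R_i^-$ therefore set up mutually inverse bijections between the two families of torsion free classes.

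I expect the main obstacle to be precisely this last step: tracking how subrepresentations and short exact sequences transform under a reflection functor at a sink, at the one vertex where $R_i^\pm$ is not simply the identity, and isolating the exact role of the simple projective $S_i$. The Snake Lemma applied to the defining three-term sequences does most of this, but some bookkeeping is required to see that ``discard $S_i$'' on one side corresponds to ``adjoin $S_i$'' on the other. Everything else — the root and length identities, the induction on $n+m$, and injectivity — is then formal.
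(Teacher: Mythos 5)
Your proposal is correct and follows essentially the same route as the paper: the same case split on whether $S_i\in\mathcal F$ (equivalently $e_i\in\inv(w)$), the same pair of reductions (restriction to $Q\setminus\{i\}$ when $S_i\notin\mathcal F$, and the mutually inverse operations $\mathcal F\mapsto R_i^+\mathcal F$ and $\mathcal G\mapsto\add(\{S_i\}\cup R_i^-\mathcal G)$ when $S_i\in\mathcal F$), the same use of left/right exactness and the dimension-vector identity to control subobjects and extensions, and the same induction on the number of vertices plus the number of indecomposables. The only (harmless) difference is that you make injectivity explicit via Proposition \ref{same-diff}, whereas the paper leaves it implicit.
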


\begin{example} Let $Q$ be the quiver $1\leftarrow 2$. Then
  the correspondence between $c_Q$-sortable elements of $W$ and
  torsion free classes is as follows:
  $$\begin{array}{cc}
    w& \mathcal F(w)\\
    e & 0\\
    s_1 & \add S_1\\
    s_2 & \add S_2\\
    s_1s_2 & \add S_1, P_2\\
    s_1s_2s_1 & \add S_1,P_2,S_2
  \end{array}$$
  Here, we have written $P_2$ for the non-simple indecomposable
  representation, and we have written $\add$ for the additive category
  generated by the subsequent list of indecomposable representations.  
  As is easly verified, the righthand column consists of the torsion free classes of $\rep Q$.  
\end{example}

\subsection{The inductive structure of torsion free classes}
In order to prove Theorem \ref{main}, we will exhibit an inductive structure to 
torsion free classes which parallels the inductive structure of $c$-sortable elements.
Suppose $\mathcal F$ is a torsion free class for $\rep Q$.  Let $i$ be a
sink of $Q$, so $S_i$ is a simple projective.  We want to prove a structure
theorem about $\mathcal F$, parallel to Theorem \ref{reading}.  
We split into two cases, based on whether $S_i \in \mathcal F$.

\begin{proposition} \label{prop1}
  If $S_i\not\in \mathcal F$, then $\mathcal F$ is supported
  on $Q\setminus \{i\}$, and thus defines a torsion free class in the
  representations of $Q \setminus \{i\}$.  
\end{proposition}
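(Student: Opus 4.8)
The plan is to show that if $S_i\notin\mathcal F$, then no representation in $\mathcal F$ has a nonzero vector space at vertex $i$; once this is established, representations of $\mathcal F$ are literally representations of the quiver $Q\setminus\{i\}$, and the torsion-free axioms transfer verbatim. The key observation is that $S_i$ is a \emph{subrepresentation} of every representation $V$ with $V_i\neq 0$: since $i$ is a sink, there are no arrows out of $i$, so any one-dimensional subspace of $V_i$, with the zero space at every other vertex, is automatically a subrepresentation (the arrows into $i$ impose no constraint on a subrepresentation, only on quotients). Hence if some $V\in\mathcal F$ had $V_i\neq 0$, then $S_i$ would be a subrepresentation of $V$, and by closure under subrepresentations $S_i\in\mathcal F$, contrary to hypothesis.

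So the first step is exactly that dichotomy argument: $S_i\notin\mathcal F$ forces $V_i=0$ for all $V\in\mathcal F$. The second step is to interpret this: a representation of $Q$ supported off vertex $i$ is the same data as a representation of $Q\setminus\{i\}$ (the full subquiver on the remaining vertices), since the only arrows lost in passing from $Q$ to $Q\setminus\{i\}$ are those incident to $i$, all of which carry the zero map anyway. This gives a fully faithful embedding $\rep(Q\setminus\{i\})\hookrightarrow\rep Q$ whose essential image is precisely the representations with $V_i=0$, and it is exact (kernels and cokernels are computed vertexwise). Under this identification, $\mathcal F$ becomes a full additive subcategory of $\rep(Q\setminus\{i\})$.

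The third and final step is to check that $\mathcal F$, viewed inside $\rep(Q\setminus\{i\})$, is still a torsion-free class there: closure under subrepresentations and under extensions. This is where one must be slightly careful, but it is routine: a subrepresentation of $V$ in $\rep(Q\setminus\{i\})$ is in particular a subrepresentation of $V$ in $\rep Q$ (the embedding is exact and preserves monomorphisms), so closure under subobjects is inherited; similarly, a short exact sequence $0\to X\to Y\to Z\to 0$ in $\rep(Q\setminus\{i\})$ with $X,Z\in\mathcal F$ gives, via the exact embedding, a short exact sequence in $\rep Q$, whence $Y\in\mathcal F$. I do not expect any genuine obstacle here; the only point requiring attention is the exactness and full faithfulness of the embedding, and in particular that $\Ext^1$ computed in $\rep(Q\setminus\{i\})$ agrees with $\Ext^1$ computed in $\rep Q$ for representations supported off $i$ — which follows because an extension in $\rep Q$ of two representations with $V_i=0$ again has zero at vertex $i$ (as $S_i$, being simple with $S_i\notin\mathcal F$, cannot be a subquotient forcing a nonzero middle term there; more directly, the middle term's dimension at $i$ is the sum of the outer terms' dimensions at $i$, namely $0$).
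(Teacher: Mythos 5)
Your proof is correct and follows essentially the same route as the paper: the heart of both arguments is that since $i$ is a sink, any representation $V$ with $V_i\neq 0$ contains $S_i$ as a subrepresentation, so closure under subobjects forces $V_i=0$ for all $V\in\mathcal F$. You construct the embedding $S_i\hookrightarrow V$ directly from the sink condition where the paper phrases it via projectivity of $S_i$ and $\dim\Hom(S_i,V)=\dim V_i$, and you spell out the (routine) transfer of the torsion-free axioms to $\rep(Q\setminus\{i\})$, which the paper leaves implicit; these are presentational differences only.
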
  

\begin{proof}  
Recall that for $M\in \rep Q$, we have that $\dim\Hom(S_i,M)=\dim M_i$, since $S_i$ is projective.  Thus, if $M_i\ne 0$, then $\Hom(S_i,M)\ne0$.  And note that in this case, the nonzero maps from $S_i$ to $M$ would have
to be injections, because $S_i$ has no non-zero subrepresentations.  That means that
if $M_i\ne 0$, we must have that $M\not\in\mathcal F$, since otherwise the
inclusion of $S_i$ into $M$ would contradict the fact that $S_i\not \in \mathcal F$
but $M\in \mathcal F$.
\end{proof}

\begin{proposition}\label{prop2}
  If $S_i\in \mathcal F$, then $R_i^+ \mathcal F$ is a torsion free class for
  $\mu_i(Q)$ which does not contain $S_i$.  If $\mathcal F$ contains finitely
  many indecomposable representations up to isomorphism, then $\mathcal F'$ contains one fewer.  
\end{proposition}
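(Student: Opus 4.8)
The plan is to verify each assertion about $\mathcal F' := R_i^+\mathcal F$ in turn, using the properties of the reflection functor collected in the earlier Proposition, together with its left-exactness. First I would check that $\mathcal F'$ is closed under subrepresentations. Suppose $V \in \mathcal F$ and $N \subseteq R_i^+(V)$ is a subrepresentation in $\rep Q'$. Since $i$ is a source of $Q' = \mu_i(Q)$, I can apply $R_i^-$; because $R_i^-$ is right exact it need not preserve injections directly, so instead I would work in the other direction. The cleaner approach is: since $V$ has no $S_i$-summand is \emph{not} assumed, first decompose $V = V' \oplus S_i^{\oplus m}$ where $V'$ has no $S_i$-summand, note $R_i^+(V) = R_i^+(V')$ by part (1) of the Proposition, and $V' \in \mathcal F$ because $\mathcal F$ is closed under summands. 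So without loss of generality $V$ has no $S_i$-summand. Then, given $N \subseteq R_i^+(V)$, I would apply $R_i^-$ and use part (4) to get a map $R_i^-(N) \to R_i^-(R_i^+(V)) \cong V$; the subtlety is that $R_i^-$ is only right exact, so this map need not be injective. The fix is to decompose $N$ into indecomposables and handle each: for an indecomposable $N$ not isomorphic to $S_i'$, part (5) gives $N \cong R_i^+(M)$ for some $M \in \rep Q$, and then $R_i^-(N) \cong M$ up to $S_i$-summands; applying $R_i^-$ to $N \hookrightarrow R_i^+(V)$ and using that on the subcategory without $S_i'$-summands the functors $R_i^+, R_i^-$ are mutually inverse equivalences (which follows from (4) and (5)), the inclusion $N \hookrightarrow R_i^+(V)$ corresponds to an inclusion $M \hookrightarrow V$, hence $M \in \mathcal F$ (subrepresentation), hence $N = R_i^+(M) \in \mathcal F'$.

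Next, closure under extensions. Given a short exact sequence $0 \to X \to Y \to Z \to 0$ in $\rep Q'$ with $X, Z \in \mathcal F'$, I want $Y \in \mathcal F'$. Reducing as above, I may assume $X, Z$ have no $S_i'$-summands and write $X = R_i^+(X_0)$, $Z = R_i^+(Z_0)$ with $X_0, Z_0 \in \mathcal F$. Applying the right exact $R_i^-$ gives an exact sequence $R_i^-(X) \to R_i^-(Y) \to R_i^-(Z) \to 0$, i.e. $X_0 \to R_i^-(Y) \to Z_0 \to 0$; I would argue the left map is injective using left exactness of $R_i^+$ on the original sequence (pulling back through the equivalence on the $S_i'$-free part), or alternatively by a dimension-vector count via part (2): $\udim R_i^-(Y') = s_i \udim Y'$ for the $S_i'$-free part $Y'$ of $Y$, and $s_i$ is linear and sends the dimension-vector exact sequence to an exact sequence. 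Either way $R_i^-(Y)$ (modulo $S_i$-summands, which lie in $\mathcal F$ since $S_i \in \mathcal F$) sits in an extension of $Z_0$ by $X_0$, both in $\mathcal F$, so it is in $\mathcal F$; then $Y$, having no $S_i'$-summand (check: $R_i^+(M)$ never has an $S_i'$-summand by part (5), and an extension of such by such has none either — this needs the small observation that $\Hom(S_i', R_i^+(M)) $ and the relevant $\Ext$ behave well, or just that $S_i'$ is injective in $\rep Q'$ since $i$ is a source, so an $S_i'$-summand of $Y$ would split off a summand of $Z$), equals $R_i^+$ of something in $\mathcal F$, so $Y \in \mathcal F'$.

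That $\mathcal F'$ does not contain $S_i'$ is immediate from part (5): every object of $R_i^+(\rep Q)$, a fortiori every object of $\mathcal F'$, has no $S_i'$-summand, and $S_i'$ certainly is its own summand. Finally, for the counting statement: by part (1) and part (5), $R_i^+$ restricts to a bijection between indecomposables of $\mathcal F$ not isomorphic to $S_i$ and indecomposables of $\mathcal F'$ (injectivity from part (4), surjectivity from (5) combined with the subrepresentation-closure argument above showing the preimages land in $\mathcal F$); since $S_i \in \mathcal F$ by hypothesis, $\mathcal F$ has exactly one more indecomposable up to isomorphism than $\mathcal F'$, so if $\mathcal F$ is finite so is $\mathcal F'$, with one fewer indecomposable.

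The main obstacle I anticipate is the closure under subrepresentations, specifically the mismatch that $R_i^+$ is left exact while its partner $R_i^-$ is only right exact, so neither functor straightforwardly transports subobjects in the direction I need. The resolution — restricting attention to the full subcategory of representations without $S_i$ (resp. $S_i'$) summands, on which $R_i^+$ and $R_i^-$ become mutually inverse exact equivalences by parts (4) and (5), and separately absorbing the $S_i$-summands into $\mathcal F$ using the hypothesis $S_i \in \mathcal F$ — is the key technical move, and once it is in place the extension-closure and the counting both follow by the same bookkeeping.
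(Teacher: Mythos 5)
There is a genuine gap in your argument for closure under subrepresentations. You claim that, because $R_i^+$ and $R_i^-$ restrict to mutually inverse equivalences between the full subcategories of objects without $S_i$- (resp.\ $S_i'$-) summands, the inclusion $N \hookrightarrow R_i^+(V)$ ``corresponds to an inclusion $M \hookrightarrow V$.'' This is false. Such an equivalence transports categorical monomorphisms \emph{of the subcategory}, but what you need is that $R_i^-$ applied to an injective map of representations is again injective, and it is not: $R_i^-$ is only right exact. Concretely, let $Q$ be the quiver $j\rightarrow i$, let $V=S_j$, so that $X'=R_i^+(V)$ is the indecomposable representation of $Q'=\mu_i(Q)$ with dimension vector $e_i+e_j$, and let $N=S_j'\subseteq X'$. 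Then $M=R_i^-(N)$ is the indecomposable representation of $Q$ with dimension vector $e_i+e_j$, which does not embed in $V=S_j$ at all; the map $R_i^-(N)\rightarrow R_i^-(X')=V$ is the surjection with kernel $S_i$. Your argument, as written, would ``prove'' that $R_i^+\add(S_j)=\add(X')$ is closed under subrepresentations, which it is not (it misses $S_j'$). That is consistent with the proposition only because $S_i\notin\add(S_j)$ — and indeed the warning sign is that your subrepresentation step never uses the hypothesis $S_i\in\mathcal F$, without which the statement is simply false.

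The repair, which is the paper's actual argument, is to note that $R_i^-$ leaves the vector spaces at all vertices $j\ne i$ unchanged, so the kernel of $R_i^-(Y')\rightarrow R_i^-(X')$ is concentrated at vertex $i$ and is therefore a direct sum of copies of $S_i$. Hence $R_i^-(Y')$ is an extension of its image (a genuine subrepresentation of $R_i^-(X')\in\mathcal F$, hence in $\mathcal F$) by that kernel (in $\mathcal F$ precisely because $S_i\in\mathcal F$), and extension-closure of $\mathcal F$ gives $R_i^-(Y')\in\mathcal F$, whence $Y'\in\mathcal F'$. Your treatment of extension-closure (the dimension-vector count upgrading the right-exact sequence to a short exact sequence, and the injectivity of $S_i'$ ruling out $S_i'$-summands of the middle term) and of the final counting statement agrees with the paper and is fine.
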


\begin{proof}
  Let us write $Q'$ for $\mu_i(Q)$.  We will write $X'$, $Y'$, etc. for
  representations of $Q'$ and $X$, $Y$, etc. for representations of $Q$.
  Let $\mathcal F'=R_i^+\mathcal F$.  We begin by proving that $\mathcal F'$
  is closed under subrepresentations.  Let $X'\in \mathcal F'$.  Since $X' \in R_i^+(\rep Q)$, we know that $X'$ has no direct summands isomorphic to $S_i'$.
  Let $Y'$ be a
  subrepresentation of $X'$.

  Let $Y=R_i^-(Y')$ and $X=R_i^-(X')$.  Since $X'\in\mathcal F'$, we know that
  $X\in \mathcal F$.  
  Since $R_i^-$ is only right exact, it is not necessarily the case that
  $Y$ is a subrepresentation of $X$.  However, $R_i^-$ applied to the inclusion
  from $Y'$ to $X'$ gives a map from $Y$ to $X$.  The image of this map is a subrepresentation of $X$, so lies in $\mathcal F$, and the kernel
  of this map is a sum of copies of $S_i$, which also lies in $\mathcal F$.
  Thus, since $\mathcal F$ is extension closed, it also contains $Y$, and thus
  $Y'\in\mathcal F'$, showing that $\mathcal F'$ is closed under subrepresentations.  
  
  We next check that $\mathcal F'$ is closed under extensions.  Suppose that we
  have an extension of $Q'$ representations:
  $$ 0 \rightarrow Y' \rightarrow X' \rightarrow Z' \rightarrow 0$$
  with $Y'$ and $Z'$ in $\mathcal F'$.

  Any indecomposable representation of $Q'$ that admits a nonzero
  morphism from $S_i'$ is isomorphic to $S_i'$.  
  By assumption, neither $Y'$ nor $Z'$ has an $S_i'$ direct summand, so
  $\Hom(S_i',Y')=0=\Hom(S_i',Z')$.  By the $\Hom$ long exact sequence, it
  follows that $\Hom(S_i',X')=0$, so $X'$ does not have any direct summands
  isomorphic to $S_i'$ either.  Applying $R_i^-$, we get
\begin{equation}\label{eqn}
  R_i^-(Y')\rightarrow R_i^-(X')\rightarrow R_i^-(Z')\rightarrow 0,\end{equation}
  since $R_i^-$ is right exact.  
  Since none of $X'$, $Y'$, $Z'$ have any $S_i'$ summands, the dimension
  vector of $R_i^-(X')$ is $s_i(\udim(X'))$, and similarly for $Y'$ and $Z'$.
  Since $s_i$ is a linear map, $s_i\udim(X')=s_i\udim(Y')+s_i\udim(Z')$, and
  it follows that (\ref{eqn}) is exact on the left as well.  Now $R_i^-(Y)$
  and $R^-_i(Z)$ are in $\mathcal F$, so, since (\ref{eqn}) is exact on the left as well,
  we can conclude that $R_i^-(X')$ is also in $\mathcal F$, and $X'\in \mathcal F'$, as desired.  

  The final statement
  follows from the fact that $R_i^+$ sends non-isomorphic
  indecompolsables other than $S_i$ to non-isomorphic indecomposables, while
  killing $S_i$.  
\end{proof}  

The previous two propositions tell us that, given $i$ a sink of $Q$ and $\mathcal F$ a torsion free class in
$\rep Q$, there is a way to associate to it a torsion free class that is ``smaller'' in
a suitable sense: either smaller because it is defined on a smaller quiver
(the case of Proposition \ref{prop1}) or smaller because it contains fewer indecomposables
(the case of Proposition \ref{prop2}).

We now want to reverse the process and show that, given a candidate ``smaller'' torsion free class, it is possible to recover  a
torsion free class in $\rep Q$.

The first case, reversing Proposition \ref{prop1}, is obvious, but we state it
to be able to refer to it later.

\begin{proposition} \label{prop1b} If $\mathcal F$ is a torsion free class in
  the category of representations of $Q\setminus \{i\}$, then it is also a torsion free class in the category of representations of $Q$, via the inclusion
  of $\rep Q\setminus\{i\}$ into $\rep Q$.  
\end{proposition}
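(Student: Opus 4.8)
The plan is to unwind the definitions and check that nothing goes wrong. We have a torsion free class $\mathcal{F}$ in $\rep (Q\setminus\{i\})$, meaning a full additive subcategory closed under subrepresentations and extensions. The functor $\rep(Q\setminus\{i\})\to\rep Q$ that ``extends by zero'' (placing the zero vector space at vertex $i$ and the zero map on every arrow incident to $i$) is exact and fully faithful, and its essential image is precisely the full subcategory of $\rep Q$ consisting of representations $M$ with $M_i=0$; I would identify $\mathcal{F}$ with its image under this functor and call that image $\mathcal{F}$ as well.

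First I would observe that $\mathcal{F}$, viewed inside $\rep Q$, is still additive: this is immediate since the embedding preserves direct sums and $\mathcal{F}$ was additive to begin with. Next, closure under subrepresentations: if $Y\in\mathcal{F}$ and $X\hookrightarrow Y$ in $\rep Q$, then $X_i\hookrightarrow Y_i=0$, so $X_i=0$, hence $X$ lies in the image of the embedding; it is then a subrepresentation of $Y$ within $\rep(Q\setminus\{i\})$, so it lies in $\mathcal{F}$. Finally, closure under extensions: given a short exact sequence $0\to X\to Y\to Z\to 0$ in $\rep Q$ with $X,Z\in\mathcal{F}$, taking the vector space at vertex $i$ gives an exact sequence $0\to X_i\to Y_i\to Z_i\to 0$ with $X_i=Z_i=0$, so $Y_i=0$, and $Y$ comes from $\rep(Q\setminus\{i\})$; the sequence is then an extension entirely within $\rep(Q\setminus\{i\})$, so $Y\in\mathcal{F}$.

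There is no real obstacle here; the only thing to be slightly careful about is the bookkeeping that subrepresentations and extensions computed in $\rep Q$ of objects supported off vertex $i$ are automatically supported off vertex $i$ and agree with the corresponding notions in $\rep(Q\setminus\{i\})$ — but this follows at once from exactness of the embedding and from reading off dimensions at vertex $i$. So the proposition is really just the statement that a torsion free class of a Serre (indeed, wide, exact-embedded) subcategory remains a torsion free class in the ambient category, which is what we have checked directly.
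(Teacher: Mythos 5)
Your proof is correct. The paper actually gives no proof of this proposition, explicitly declaring it obvious; your verification --- that extension by zero is an exact, fully faithful embedding whose essential image (representations vanishing at vertex $i$) is closed under subobjects and extensions in $\rep Q$, so the two notions of subrepresentation and extension agree --- is precisely the routine check the paper has in mind.
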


The second case, reversing Proposition \ref{prop2}, requires a proof, but it
is similar to that of Proposition \ref{prop2}.

\begin{proposition}\label{prop2b} Let $\mathcal F'$ be a torsion free class in $\rep Q'$. 
  Let $\mathcal F$ be the additive hull of $R_i^-\mathcal F'$ and $S_i$.
  Then $\mathcal F$ is a torsion free class in $\rep Q$.  \end{proposition}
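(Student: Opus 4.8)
The plan is to prove that
\[
\mathcal F \;=\; \{\, Y \in \rep Q \mid R_i^+(Y) \in \mathcal F' \,\},
\]
and then to observe that the right-hand side is automatically a torsion free class, being the preimage of the torsion free class $\mathcal F'$ under the \emph{left exact} functor $R_i^+$. This latter observation is formal: if $X$ is a subrepresentation of some $Y$ with $R_i^+(Y)\in\mathcal F'$, then left exactness makes $R_i^+(X)$ a subrepresentation of $R_i^+(Y)$, so $R_i^+(X)\in\mathcal F'$; and given a short exact sequence $0\to X\to Y\to Z\to 0$ with $R_i^+(X),R_i^+(Z)\in\mathcal F'$, left exactness yields an exact sequence $0\to R_i^+(X)\to R_i^+(Y)\to R_i^+(Z)$, and writing $W$ for the image of its last map we get a short exact sequence $0\to R_i^+(X)\to R_i^+(Y)\to W\to 0$ whose outer terms lie in $\mathcal F'$ (for $W$ is a subrepresentation of $R_i^+(Z)$), whence $R_i^+(Y)\in\mathcal F'$ since $\mathcal F'$ is extension closed.

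So the real work is the displayed equality. For the inclusion ``$\subseteq$'', I would first unwind the definition of the additive hull: using that $R_i^-$ kills $S_i'$ and sends the remaining indecomposable summands of a representation to indecomposable representations, every object of $\mathcal F$ is isomorphic to $S_i^{\oplus m}\oplus R_i^-(X')$ for some $m\ge 0$ and some $X'\in\mathcal F'$. Applying $R_i^+$ and invoking the dual of part~(4) of the proposition on reflection functors, $R_i^+$ of such an object equals the direct sum of the indecomposable summands of $X'$ not isomorphic to $S_i'$; this is a direct summand of $X'$, hence lies in $\mathcal F'$. For ``$\supseteq$'', given $Y$ with $R_i^+(Y)\in\mathcal F'$, write $Y=Y_0\oplus S_i^{\oplus m}$ with $Y_0$ having no summand isomorphic to $S_i$. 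Since $R_i^+(S_i)=0$ we have $R_i^+(Y_0)=R_i^+(Y)$, and part~(4) gives $Y_0\cong R_i^-(R_i^+(Y_0))=R_i^-(R_i^+(Y))\in R_i^-(\mathcal F')$; thus $Y\in\mathcal F$.

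The main (and essentially only) obstacle is the bookkeeping in this last step: one has to keep separate the $S_i$-summands in $\rep Q$ and the $S_i'$-summands in $\rep Q'$, and apply part~(4) of the reflection-functor proposition with the correct variance, using $R_i^-R_i^+=\mathrm{id}$ on $Q$-representations without an $S_i$-summand for ``$\supseteq$'' and the dual $R_i^+R_i^-=\mathrm{id}$ on $Q'$-representations without an $S_i'$-summand for ``$\subseteq$''. I expect nothing genuinely delicate here; in particular, in contrast to the proof of Proposition~\ref{prop2}, this argument only ever applies the left exact functor $R_i^+$ to short exact sequences and never the right exact functor $R_i^-$, so there is no need to recover left exactness by a dimension-vector computation.
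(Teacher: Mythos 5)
Your proposal is correct and is essentially the paper's argument: the paper likewise verifies closure under subrepresentations via left exactness of $R_i^+$ and closure under extensions by replacing $R_i^+(Z)$ with the image of $R_i^+(Y)\to R_i^+(Z)$ inside the exact sequence $0\to R_i^+(Y)\to R_i^+(X)\to R_i^+(Z)$, and it implicitly uses both inclusions of your identity $\mathcal F=(R_i^+)^{-1}(\mathcal F')$ when passing back and forth between $\rep Q$ and $\rep Q'$. Front-loading that identity and then quoting the formal ``preimage under a left exact functor'' observation is just a cleaner packaging of the same steps.
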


\begin{proof}  
  Let $X$ be in $\mathcal F$, and let $Y$ be a subrepresentation of $X$.  We
  want to show that $Y\in \mathcal F$.  Let $X'=R^+_i(X)$, and let
  $Y'=R^+_i(Y)$.  Since $R^+_i$ kills $S_i$, $R^+_i\mathcal F=R^+_i(R^-_i \mathcal F')\subseteq\mathcal F'$.  
  Since $R^+_i$ is left exact, $Y'$ is a subrepresentation of $X'$, and thus is in $\mathcal F'$.  Now $Y$  is isomorphic to the direct sum of $R_i^+(Y')$ with some
  copies
  of $S_i$, and both of these are in $\mathcal F$, so $Y$ is in $\mathcal F$.  
  
  Now suppose that we have a short exact sequence
  $$ 0 \rightarrow Y \rightarrow X \rightarrow Z \rightarrow 0$$
  with $Y$ and $Z$ in $\mathcal F$.  We want to show that $X\in \mathcal F$.

  Applying $R_i^+$, we get an exact sequence:
  $$ 0 \rightarrow R^+_i(Y) \rightarrow R^+_i(X) \rightarrow R^+_i(Z)$$

  By our assumptions, $R^+_i(Y)$ and $R^+_i(Z)$ are both in $\mathcal F'$.
  In order to get exactness on the right, we can replace $R^+_i(Z)$ by
  the cokernel of the map from $R^+_i(Y)$ to $R^+_i(X)$.  Note that it is
  isomorphic to a subrepresentation of $R^+_i(Z)$, and therefore it is also
  in $\mathcal F'$.  It follows that $R^+_i(X)\in \mathcal F'$, and thus that
  $X$ is in $\mathcal F$.
\end{proof}

Note that we actually only use the above proposition in the case that $\mathcal F'$ does not contain $S_i'$, but we give the more general statement, since the proof is no different.

\subsection{Proof of Theorem \ref{main}}  
We will prove Theorem \ref{main} by establishing the next two propositions.  

\begin{proposition}
  Let $\mathcal F$ be a finite
  torsion free class in $\rep Q$.  Then there is some
  $c$-sortable element $w$ such that $\mathcal F=\mathcal F(w)$.
\end{proposition}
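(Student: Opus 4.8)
The plan is to induct on the number of indecomposables in $\mathcal F$ (or, equivalently, on $\ell(w)$ for the $w$ we will produce), running the induction in parallel with the inductive structure of $c$-sortable elements given by Theorem~\ref{reading}. First I would fix a sink $i$ of $Q$; since $Q$ has no oriented cycles such a sink exists, and by the Lemma the associated Coxeter element $c = c_Q$ can be written $c = s_i c'$ where $c' = s_{i_2}\dots s_{i_n}$ is a Coxeter element for $W_{\langle s_i\rangle}$, the Weyl group of $Q\setminus\{i\}$. Note $\ell(s_i c) = \ell(c) - 1$ in the sense that $s_i$ may be taken first in a reduced word for any $c_Q$-sortable element that uses $s_i$; this is exactly the dichotomy that Theorem~\ref{reading} exploits. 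I then split into the two cases according to whether $S_i \in \mathcal F$.

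In the case $S_i \notin \mathcal F$, Proposition~\ref{prop1} tells us $\mathcal F$ is supported on $Q\setminus\{i\}$, hence is a (finite) torsion free class there. Since $Q\setminus\{i\}$ has strictly fewer vertices, by induction there is a $c'$-sortable element $w \in W_{\langle s_i\rangle}$ with $\mathcal F = \mathcal F(w)$ computed inside $\rep(Q\setminus\{i\})$. Because $w \in W_{\langle s_i\rangle}$, no reduced word for $w$ begins with $s_i$, so $\ell(s_i w) > \ell(w)$, and by Theorem~\ref{reading}(1) $w$ is $c_Q$-sortable. It remains to check that $\mathcal F(w)$ computed in $\rep Q$ agrees with $\mathcal F(w)$ computed in $\rep(Q\setminus\{i\})$: the inversion set $\inv(w)$ is the same set of roots (it lies in the span of the $e_j$, $j \neq i$), and the indecomposable representations with those dimension vectors are the same under the inclusion $\rep(Q\setminus\{i\}) \hookrightarrow \rep Q$ — this follows from Theorem~\ref{real-roots} and the fact that an indecomposable with support avoiding $i$ is the same object in either category. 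So $\mathcal F = \mathcal F(w)$ in $\rep Q$.

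In the case $S_i \in \mathcal F$, apply Proposition~\ref{prop2}: $\mathcal F' := R_i^+\mathcal F$ is a finite torsion free class in $\rep Q'$, where $Q' = \mu_i(Q)$, with exactly one fewer indecomposable. Now $c_{Q'}$ is obtained from $c_Q$ by moving $s_i$ from the front to the back, i.e.\ $c_{Q'} = s_{i_2}\dots s_{i_n}s_i$ — this is precisely the Coxeter element appearing in Theorem~\ref{reading}(2), and one checks it is $c_{Q'}$ using the Lemma, since reversing the arrows at $i$ reverses the order of $s_i$ relative to its neighbours. By induction there is a $c_{Q'}$-sortable $w'$ with $\mathcal F' = \mathcal F(w')$ in $\rep Q'$. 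Since $\mathcal F'$ does not contain $S_i'$ (Proposition~\ref{prop2}), $w'$ has no reduced word beginning with $s_i$, equivalently $s_i \notin \inv(w')$, so $\ell(s_i w') > \ell(w')$, hence $w := s_i w'$ has $\ell(w) = \ell(w')+1$ and by Theorem~\ref{reading}(2) $w$ is $c_Q$-sortable. Finally I must check $\mathcal F(w) = \mathcal F$ in $\rep Q$. Here $\inv(w) = \{e_i\} \cup s_i\,\inv(w')$, and the reflection $s_i$ sends the dimension vectors of the indecomposables of $\mathcal F'$ (none of which is $S_i'$) to the dimension vectors of their images under $R_i^-$, by Proposition (2) of the reflection-functor section; together with $S_i$ itself (whose dimension vector is $e_i$), these are exactly the dimension vectors of the indecomposables of $\mathcal F$ — using that $\mathcal F$ is the additive hull of $R_i^-\mathcal F'$ and $S_i$ (Proposition~\ref{prop2b}), together with $R_i^-R_i^+\mathcal F = \mathcal F$ modulo $S_i$-summands (Proposition (4)). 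So $\mathcal F(w) = \mathcal F$.

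The base case is $\mathcal F = 0$, matched with $w = e$, which is vacuously $c$-sortable. The main obstacle I anticipate is the bookkeeping in the $S_i \in \mathcal F$ case: one must be careful that $R_i^+$ and $R_i^-$ set up a genuine bijection between the indecomposables of $\mathcal F$ other than $S_i$ and those of $\mathcal F'$ (this is Proposition (4) and (5) of Section~3), that this bijection matches dimension vectors via $s_i$ on the nose (Proposition (2), which requires the no-$S_i$-summand hypothesis — hence the importance of $S_i' \notin \mathcal F'$), and that passing to $Q'$ genuinely reduces the induction parameter while the Coxeter element transforms in exactly the way Theorem~\ref{reading}(2) demands. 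The identification $c_{Q'} = s_{i_2}\cdots s_{i_n} s_i$ is the linchpin tying the representation-theoretic reduction to the combinatorial one.
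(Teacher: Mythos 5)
Your proposal is correct and follows essentially the same route as the paper's proof: induction on the number of vertices and the number of indecomposables, splitting on whether $S_i\in\mathcal F$ for the sink $i$ corresponding to the initial letter of $c_Q$, and matching the two cases with the two clauses of Theorem~\ref{reading} via Propositions~\ref{prop1} and~\ref{prop2}. The extra details you supply (the agreement of $\mathcal F(w)$ across $\rep(Q\setminus\{i\})$ and $\rep Q$, and the identification $c_{Q'}=s_{i_2}\cdots s_{i_n}s_i=s_ic_Qs_i$) are points the paper passes over more quickly but are entirely consistent with its argument.
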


\begin{proof} The proof is by induction on the number of vertices of $Q$ and the number
of indecomposable representations of $\mathcal F$.

Let the first reflection in $c_Q$ be $s_i$.  By the definition of $c_Q$,
vertex $i$ is a sink of $Q$.  
We split into two cases depending on whether or not $S_{i}\in\mathcal F$.

If $S_i\not\in \mathcal F$, then by Proposition \ref{prop1},
$\mathcal F$ is a torsion free class in the representations of
$Q\setminus \{i\}$.  By induction, there is a
$c_{Q\setminus\{i\}}$-sortable element $w$ of $W_{\la i\ra}$ such that
$\mathcal F=\mathcal F(w)$.
This $w$ is also $c_Q$-sortable in $W$ by Theorem \ref{reading}, and we are
done.

If $S_i\in\mathcal F$, then by Proposition \ref{prop2},
$\mathcal F'=R_i^+\mathcal F$ is a
torsion free class for $Q'=\mu_i(Q)$.  By induction, $\mathcal F'=\mathcal F(w')$
for $w'$ a $c_{Q'}$-sortable element.  Since $\mathcal F'$ does not contain $S_i'$, 
we know $\inv(w')$ does not contain $e_i$.  Therefore, no reduced expression for $w'$ begins with $s_i$, and $\ell(s_iw')=\ell(w')+1$.  Let $w=s_iw'$.  Since $c_{Q'}=s_ic_Qs_i$, Theorem
\ref{reading} tells us that $w$ is $c_Q$-sortable.
Now:
\begin{eqnarray*}
\inv(w) &=& \{e_i\} \cup \{s_i\alpha \mid \alpha\in \inv(w')\}\\
&=& \{\udim S_i\} \cup \{ s_i\udim M' \mid M' \textrm{ indecomposable in } \mathcal F'\}\\
&=& \{\udim M \mid M \textrm { indecomposable in } \mathcal F\},
\end{eqnarray*}
and this shows that $w$ is the desired $c$-sortable element of $W$.  
\end{proof}

\begin{proposition} Let $w$ be a $c_Q$-sortable element of $W$. Then
  $\mathcal F(w)$ is a finite torsion free class.\end{proposition}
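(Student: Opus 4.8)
The plan is to mirror the proof of the previous proposition, running the induction in the opposite direction using Theorem \ref{reading} and the ``reversing'' Propositions \ref{prop1b} and \ref{prop2b}. As before, let $s_i$ be the first reflection of $c_Q$, so that $i$ is a sink of $Q$. The induction is on the number of vertices of $Q$ together with $\ell(w)$. There are two cases, governed exactly by the dichotomy in Theorem \ref{reading}: either $\ell(s_iw)>\ell(w)$, equivalently $e_i\notin\inv(w)$, equivalently no reduced word for $w$ begins with $s_i$; or $\ell(s_iw)<\ell(w)$, equivalently $e_i\in\inv(w)$.

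\emph{First case: $\ell(s_iw)>\ell(w)$.} Here Theorem \ref{reading}(1) says $w$ lies in $W_{\la s_i\ra}$ and is $c_{Q\setminus\{i\}}$-sortable there. By induction (fewer vertices) $\mathcal F(w)$, computed inside $\rep(Q\setminus\{i\})$, is a finite torsion free class for $Q\setminus\{i\}$; since $e_i\notin\inv(w)$, none of the relevant indecomposables are supported at $i$, so this category coincides with $\mathcal F(w)$ as a subcategory of $\rep Q$. By Proposition \ref{prop1b} it is a torsion free class in $\rep Q$, and it is finite.

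\emph{Second case: $\ell(s_iw)<\ell(w)$.} Put $w'=s_iw$, so $\ell(w')=\ell(w)-1$ and, by Theorem \ref{reading}(2) together with $c_{Q'}=s_ic_Qs_i$ for $Q'=\mu_i(Q)$, $w'$ is $c_{Q'}$-sortable. Moreover $e_i\notin\inv(w')$ (otherwise $w'$, and hence $w$, would have a reduced word manifesting a non-reduced product $s_is_i\dots$). By induction (smaller length) $\mathcal F(w')$ is a finite torsion free class in $\rep Q'$ not containing $S_i'$. By Proposition \ref{prop2b}, the additive hull $\mathcal F$ of $R_i^-\mathcal F(w')$ together with $S_i$ is a torsion free class in $\rep Q$, and it is finite. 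It remains to identify $\mathcal F$ with $\mathcal F(w)$: using $\inv(w)=\{e_i\}\cup s_i\inv(w')$, Proposition \ref{prop2}(2) (applied to $R_i^-$), the fact that $R_i^-$ induces a bijection on indecomposables without an $S_i'$ summand, and Theorem \ref{real-roots} identifying roots with indecomposables, one checks that the indecomposables of $\mathcal F$ are exactly those whose dimension vectors lie in $\inv(w)$.

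\emph{Main obstacle.} The routine parts are the bookkeeping with inversion sets and lengths; the one point requiring a little care is the identification $\mathcal F=\mathcal F(w)$ in the second case. Specifically, one must be sure that applying $R_i^-$ to the indecomposables of $\mathcal F(w')$ produces \emph{indecomposable} representations of $Q$ with the expected dimension vectors $s_i\udim M'$, and that adjoining $S_i$ accounts for exactly the extra root $e_i$; this is where Proposition \ref{prop2}(2), the bijectivity of $R_i^-$ on the relevant indecomposables, and the uniqueness clause of Theorem \ref{real-roots} do the work. Everything else is a direct transcription of the preceding proof with the arrows reversed.
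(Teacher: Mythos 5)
Your proof is correct and follows essentially the same route as the paper: induction on the number of vertices and on $\ell(w)$, splitting according to whether $\ell(s_iw)$ is larger or smaller than $\ell(w)$, and invoking Theorem \ref{reading} together with Propositions \ref{prop1b} and \ref{prop2b}. Your extra care in the second case --- verifying that the additive hull of $R_i^-\mathcal F(s_iw)$ and $S_i$ really is $\mathcal F(w)$, via the dimension-vector formula for $R_i^-$ and the uniqueness clause of Theorem \ref{real-roots} --- makes explicit a step the paper passes over quickly, and is welcome.
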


\begin{proof} The proof is by induction on the number of vertices of $Q$
  and the number of inversions of $w$.  Let the first reflection of $c_Q$
  be $s_i$. Then vertex $i$ is a sink of $Q$.
  We split into two cases depending on whether $\ell(s_iw)>\ell(w)$ or
  $\ell(s_iw)<\ell(w)$.

  Suppose first that $\ell(s_iw)>\ell(w)$.  Since $w$ is $c_Q$-sortable,
  Theorem \ref{reading} tells us that $w$ is $c_{Q\setminus i}$-sortable in
  $W_{\la i\ra}$.  By induction, $\mathcal F(w)$ is a torsion free
  class in $\rep Q\setminus\{i\}$.  Now Proposition \ref{prop1b} tells
  us that $F(w)$ is also a torsion free class in $\rep Q$.

  Suppose next that $\ell(s_iw)<\ell(w)$.  Theorem \ref{reading} tells
  us that $s_iw$ is $s_ic_{Q}s_i$-sortable. We note that this means that
  $s_iw$ is $c_{\mu_i(Q)}$-sortable, so, by induction, $\mathcal F(s_iw)$ is
  a torsion free class for $\mu_i(Q)$.  We know that
  \begin{eqnarray*}
    \inv(w)&=&\{e_i\}\cup
        \{s_i\alpha \mid \alpha\in \inv(s_iw)\}\\
        &=& \{\udim S_i\} \cup \{ \udim R_i^-M \mid M \textrm { indecomposable in }\mathcal F(s_iw)\}\end{eqnarray*}
  Proposition \ref{prop2b} now
  tells us that $\mathcal F(w)$ is a torsion free class, as desired.  
\end{proof}

This completes the proof of Theorem \ref{main}.

\end{document}